\newtheorem{Theorem}{Theorem}[section]
\newtheorem{prop}[Theorem]{Proposition}
\newtheorem{Lemma}[Theorem]{Lemma}
\newtheorem{Remark}[Theorem]{Remark}
\def\beq#1#2\eeq{%
        \begin{equation}%
        \label{#1}%
            #2%
        \end{equation}%
    }
\title[Geodesic scattering on hyperboloids and Kn\"orrer's map ]{Geodesic scattering on hyperboloids and Kn\"orrer's map}
\author{A.P. Veselov}
\address{Department of Mathematical Sciences,
Loughborough University, Loughborough LE11 3TU, UK, Moscow State University and Steklov Mathematical Institute, Moscow, Russia}
\email{A.P.Veselov@lboro.ac.uk}
\author{L. Wu}\address{School of Mathematical Science\\
Huaqiao University, Quanzhou,
Fujian, P. R. China 362021}
\email{wulihua@hqu.edu.cn}
\begin{document}

\maketitle

\begin{abstract}
We use the results of Moser and Kn\"orrer on relations between geodesics on quadrics and solutions of the classical Neumann system to describe explicitly the geodesic scattering on hyperboloids.

We explain the relation of Kn\"orrer's reparametrisation with projectively equivalent metrics on quadrics introduced by Tabachnikov and independently by Matveev and Topalov, giving a new proof of their result.
We show that the projectively equivalent metric is regular on the projective closure of hyperboloids and extend Kn\"orrer's map to this closure.
\end{abstract}


\section{Introduction}

Geodesic flows on ellipsoids have been a subject of substantial interest since the fundamental work of Jacobi \cite{Jac} (see in particular, discussion in Arnold's book \cite{A}). Moser  reinvigorated this area in 1970s by showing the deep relation to both classical geometry, spectral and soliton theory \cite{M1,M2,M3}.

The geodesic flow on the hyperboloids, to the best of our knowledge, was not studied in such details, partly because the generalisation from the ellipsoid case seems to be obvious.

Nevertheless, there are some interesting questions which we could not find the answer in the literature. Consider, for example, one-sheeted hyperboloid in $\mathbb R^3$ given by
\begin{equation}\label{eq 0}
Q(x):=\frac{x_0^2}{a_0}+\frac{x_1^2}{a_1}+\frac{x_2^2}{a_2}=1
\end{equation}
with $a_0<0<a_1<a_2$.
There are three obvious geodesics given by the intersections with the coordinate planes, one of them is the elliptical neck of hyperboloid, given by $x_0=0$.
In fact, this is the only closed geodesic in this case, all other geodesics are unbounded. Since at the infinity the hyperboloid is close to its asymptotic cone $Q(x)=0$,
which is flat, we know that such geodesic should have asymptotic velocity $y$ at infinity, satisfying the relations
$$
Q(y)=0, \, |y|=1.
$$

There are two natural questions we would like to address.

{\it Question 1. Consider a geodesic coming from the infinity with, say, positive $x_0$. Will it be reflected back, or will it pass through to the infinity with negative $x_0$?
How many times will it rotate around the hyperboloid?}

As we will see there is also an intermediate case, when geodesics stuck spiralling around the neck (see Fig. 1).

\begin{figure}[h]
  \includegraphics[width=30mm]{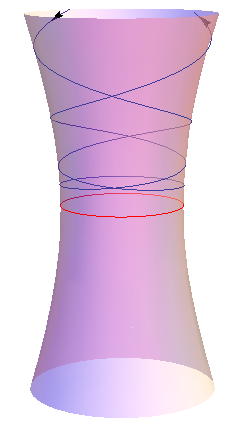}\quad
  \includegraphics[width=35mm]{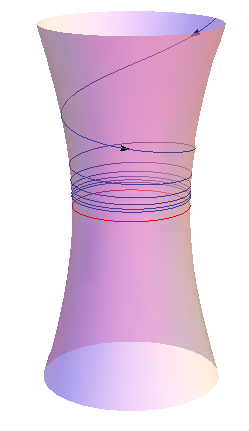}\quad
  \includegraphics[width=37mm]{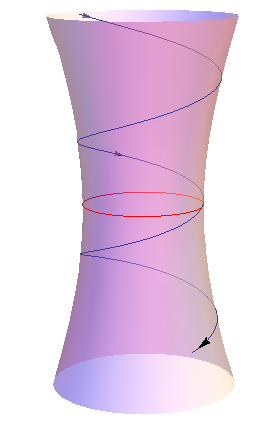}
  \caption{Geodesic scattering on one-sheeted hyperboloid}
\end{figure}

{\it Question 2. Is there an explicit relation between the values of asymptotic velocities at $\pm \infty$ in terms of the corresponding integrals?}

Both questions can be easily answered for the symmetric case $a_1=a_2$, using the Clairaut integral for the geodesics on the surface of revolution (see Section 7 below).

We will consider the general case of the geodesic scattering on hyperboloids in $\mathbb R^{n+1}$ using Moser's analysis  \cite{M1,M2,M3} of Jacobi problem and Kn\"orrer's results \cite{Kn} on relation of this problem with the classical Neumann problem of motion on sphere in harmonic field \cite{Neu}.

In particular, with regard to Question 1 we have the following result.

\begin{Theorem}
A geodesic on one-sheeted hyperboloid $$\frac{x_0^2}{a_0} + \dots +\frac{x_n^2}{a_n} =1$$ in $\mathbb R^{n+1}$ with $a_0<0<a_1<\dots <a_n$ is crossing the neck ellipsoid with $x_0=0$  if and only if the integral $$F_0=y_0^2+\sum_{j=1}^n\frac{(x_ky_j-x_jy_k)^2}{a_0-a_j}>0.$$
When $F_0<0$ the geodesic is reflected back to infinity, while for $F_0=0$ the geodesic exponentially approaches a geodesic on the neck ellipsoid.
\end{Theorem}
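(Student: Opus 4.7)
\emph{Plan.} The strategy is to combine conservation of the Moser--Uhlenbeck integral $F_0$ with Kn\"orrer's reparametrisation sending the geodesic flow on the hyperboloid to the Neumann flow on the sphere, both recalled earlier in the paper.

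The easy direction ``crossing $\Rightarrow F_0 \ge 0$'' follows from a direct computation. Substituting $x_0 = 0$ in the formula for $F_0$ yields
\begin{equation*}
F_0\big|_{x_0=0} \;=\; y_0^2\left(1 - \sum_{j=1}^n \frac{x_j^2}{a_j - a_0}\right),
\end{equation*}
and since on the neck $\sum_{j=1}^n x_j^2/a_j = 1$ while $0 < a_j < a_j - a_0$ (as $a_0 < 0$), the parenthesised factor is strictly positive. Hence $F_0 \ge 0$ on the neck with equality only if $y_0 = 0$; but the data $x_0 = 0$, $y_0 = 0$ already fully determines the geodesic as a neck geodesic by uniqueness. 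Conservation of $F_0$ then simultaneously yields ``$F_0 < 0 \Rightarrow$ reflected'': such an orbit cannot reach $x_0 = 0$, must therefore remain in a single half-space $\{x_0 > 0\}$ or $\{x_0 < 0\}$, and being unbounded it returns to infinity on the same side.

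The main obstacle is the converse ``$F_0 > 0 \Rightarrow$ crossing''. I would prove it by applying Kn\"orrer's map to lift the geodesic to a trajectory of the Neumann system on the unit sphere $S^n$, where the image of the neck $\{x_0 = 0\}$ is the equator $\{q_0 = 0\}$ and $F_0$ is identified with the appropriate Neumann integral on that side. The Neumann trajectory is confined to a compact Liouville torus whose allowed region in $S^n$ is explicitly described by the signs of the Neumann integrals; the condition $F_0 > 0$ (with the sign convention dictated by Kn\"orrer's map) becomes exactly the condition for this region to meet the equator, and pulling back produces the crossing $x_0(t_*) = 0$. The technical points to check are the image of the hyperboloid's infinity under Kn\"orrer's map and the precise sign identification between $F_0$ and its Neumann counterpart.

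Finally, for the separatrix case $F_0 = 0$ the orbit lies on the stable (or unstable) manifold of the invariant submanifold swept out by the geodesics of the neck ellipsoid. Writing the constrained geodesic equation as $\ddot x = \mu A^{-1} x$ one finds $\mu = -\langle A^{-1}\dot x, \dot x\rangle/|A^{-1}x|^2$, which is strictly negative along a neck geodesic since $Q(\dot x) = \sum_{j=1}^n \dot x_j^2/a_j > 0$ there; combined with $a_0 < 0$ this gives $\mu/a_0 > 0$, so the transverse linearisation $\ddot x_0 = (\mu/a_0)\,x_0$ is hyperbolic with real exponents $\pm\sqrt{\mu/a_0}$. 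Normal hyperbolicity of the neck submanifold, together with the standard stable-manifold theorem, then produces the asserted exponential approach to a neck geodesic as $t \to +\infty$.
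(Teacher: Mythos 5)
Your forward direction is correct and in fact more elementary than the paper's: restricting $F_0$ to $x_0=0$ gives $F_0=y_0^2\bigl(1-\sum_{j\ge 1}x_j^2/(a_j-a_0)\bigr)$, and since $\sum_{j\ge1}x_j^2/a_j=1$ on the neck while $a_j-a_0>a_j>0$, the bracket is strictly positive; the paper instead reads all of this off from the case analysis of the graphs of $\Phi_z$ in elliptic coordinates. However, both hard parts of your plan are only sketched, and each defers exactly the step where the real work lies. For the converse ($F_0>0\Rightarrow$ crossing) it is not enough that the projection of the Liouville torus meets the equator $q_0=0$: a geodesic corresponds only to a \emph{finite arc} of the Neumann trajectory between two consecutive contacts with the asymptotic hypersurfaces $(Aq,q)=0$ (the Proposition at the start of Section 5), so you must show that the relevant elliptic coordinate actually attains $u=b_0$ (equivalently $z_1=a_0$) \emph{within that arc}. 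The paper obtains this from the explicit ranges of the elliptic coordinates dictated by the sign pattern of the $F_k$ together with the Dubrovin equations, which force each coordinate to sweep its whole admissible interval monotonically between turning points; your proposal lists precisely this as a ``technical point to check.''

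The $F_0=0$ case has the more serious gap. Your computation of the transverse linearisation ($\mu/a_0>0$, hence normal hyperbolicity of the neck) is fine, but the stable manifold theorem is a \emph{local} statement: it controls orbits already in a neighbourhood of the neck and already known to lie on $W^s$. You still need to show that an orbit with $F_0=0$ coming from infinity enters that neighbourhood and lies on $W^s\cup W^u$ rather than turning back beforehand; identifying the level set $\{F_0=0\}$ with $W^s\cup W^u$ is a global claim that again requires the integrable structure. The paper's route is to note that $F_0=0$ makes $u=b_0$ a double root of $R$, so the spectral curve degenerates and the Dubrovin equation near $u_1=b_0$ reads $\dot u_1\sim c_0(b_0-u_1)$, which yields in one stroke both that $u_1\to b_0$ and that the approach is exponential. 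Finally, your assertion that an $F_0<0$ orbit, ``being unbounded, returns to infinity'' assumes unboundedness, which for a non-neck geodesic is itself a consequence of the range analysis of the elliptic coordinates rather than something automatic.
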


We describe the scattering map explicitly in terms of the real abelian integrals (Theorem \ref{scattering}). Note that in the complex case the situation is much more tricky, see e.g. the discussion of this issue in relation with the billiard on an ellipsoid in \cite{V0}. We briefly discuss the quantum situation in the simplest case in section 8.

We start with a review of the celebrated results of Moser and Kn\"orrer presenting all the necessary calculations. The geodesic scattering is described explicitly in sections 5 and 6 with particular analysis of two-dimensional situation. In sections 7 and 8 we briefly discuss the classical and quantum problem in the symmetric 2D case.

In section 9 we explain the relation of Kn\"orrer's map with projectively equivalent metrics on quadrics discovered by Tabachnikov \cite{T,T2} and independently by Matveev and Topalov \cite{MT,TM}, giving a new proof of their result. We show that in the hyperboloid case the projectively equivalent metric (in contrast to the usual induced one) is regular on the projective closure of hyperboloid $\mathcal H \subset \mathbb RP^{n+1}$ and extend Kn\"orrer's map to it, completing the relation between geodesics and solutions of Neumann system in the hyperboloid case.


\section{Jacobi geodesic problem after Moser}\setcounter{equation}{0}

We follow here closely Moser's work \cite{M1,M2,M3} replacing ellipsoid by
a one-sheeted $n$-dimensional hyperboloid in the Euclidian space $\mathbb{R}^{n+1}$ given by
\begin{equation}\label{eq 2.1}
\begin{array}{rcl}
Q(x):=(A^{-1}x,x)=1
\end{array}
\end{equation}
where  $A=diag(a_0,a_1,\cdots,a_n)$ with
$a_0<0<a_1<\dots<a_n$.

The geodesics on it in the natural parameter $s$ (the length of arc) are
\begin{equation}\label{eq 2.2}
\begin{array}{rcl}
x^{''}=-\lambda Bx,
\end{array}
\end{equation}
where $B=A^{-1},\ '=\frac{d}{ds},$ $\lambda$ is Lagrange multiplier, which is determined by the constraint $(Bx,x)=1$:
$$
(Bx,x')=0, \, \, (Bx',x')+(Bx,x'')=0, \,\, (Bx',x')-(Bx,\lambda Bx)=0,
$$
which implies the formula for Lagrange multiplier
\beq{lambda}
\lambda=\frac{(Bx',x')}{(Bx,Bx)}.
\eeq

The following result goes back to Joachimsthal (see Section 3 in \cite{M2}).

\begin{prop} The function
\begin{equation}\label{eq 2.5}
\begin{array}{rcl}
J(x,x')=(Bx,Bx)(Bx',x'),
\end{array}
\end{equation}
is an integral of the geodesic problem \eqref{eq 2.2}.
\end{prop}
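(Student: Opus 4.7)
The proof should be a short direct differentiation along the flow. The plan is to compute $\frac{d}{ds}J(x,x')$, substitute the geodesic equation \eqref{eq 2.2} and the expression \eqref{lambda} for the Lagrange multiplier, and verify that everything cancels.

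Carrying out step one, using the symmetry of $B$ and the product rule,
\begin{equation*}
\frac{d}{ds}\bigl[(Bx,Bx)(Bx',x')\bigr] = 2(Bx,Bx')(Bx',x') + (Bx,Bx)\cdot 2(Bx',x'').
\end{equation*}
In the last term, substitute $x'' = -\lambda Bx$ from \eqref{eq 2.2}, which gives $(Bx',x'') = -\lambda (Bx',Bx) = -\lambda(Bx,Bx')$. Then replace $\lambda$ by $(Bx',x')/(Bx,Bx)$ from \eqref{lambda}. The factor $(Bx,Bx)$ in front of the second term cancels the denominator of $\lambda$, and what remains is exactly the negative of the first term:
\begin{equation*}
\frac{d}{ds}J = 2(Bx,Bx')(Bx',x') - 2(Bx,Bx')(Bx',x') = 0.
\end{equation*}

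There is no real obstacle here; the only thing to be careful about is the symmetry identity $(Bx',Bx)=(Bx,Bx')$ (automatic since $B$ is a symmetric real matrix and the inner product is Euclidean) and the observation that $\lambda$ is itself well-defined along the geodesic since $Bx \neq 0$ on the hyperboloid (because $(Bx,x)=1$ forces $Bx$ to be nonzero). One could alternatively rewrite $J$ in the geometric form $|Bx|^2 \cdot |x'|^2_B$ and notice that, along the motion, the identity $(Bx,x')=0$ (itself a consequence of differentiating the constraint $(Bx,x)=1$) together with $x''\parallel Bx$ forces the rate of change of each factor to compensate the other; but the straightforward one-line differentiation above is the cleanest route and is what I would record.
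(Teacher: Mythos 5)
Your computation is correct and is essentially identical to the paper's own proof: differentiate $J$, substitute $x''=-\lambda Bx$, and use $\lambda=(Bx',x')/(Bx,Bx)$ to see the two terms cancel. Nothing to add.
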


\begin{proof}
We have
\begin{equation}\label{eq 2.6}
\begin{array}{rcl}
J'&=&2(Bx',Bx)(Bx',x')+2(Bx,Bx)(Bx',x'')\\
  &=& 2(Bx',Bx)(Bx',x')-2\lambda(Bx,Bx)(Bx',Bx)\\
  &=&2(Bx',Bx)((Bx',x')-\lambda(Bx,Bx))=0.
\end{array}
\end{equation}
\end{proof}

As an important corollary we see that $$\lambda=\frac{(Bx',x')}{(Bx,Bx)}=\frac{J}{|Bx|^4}$$
does not change sign for a given geodesic $x(s).$

In the ellipsoidal case $J$ is always positive, while in hyperboloid case it may have any sign.
In particular, if Joachimsthal integral $J=0$ we have the generating straight lines on the hyperboloid, lying on the intersection of the hyperboloid with the asymptotic cone with vertex at a given point.

The full set of the integrals of the geodesic problem can be given by the following geometric construction due to Moser \cite{M1,M2}.

Let $x=u+tv$ be a straight line in $\mathbb R^{n+1}$. Then it is easy to check that the condition that this line is tangent to the quadric
$(Bx,x)=1$ is
$$
(1-(Bu,u))(Bv,v)+(Bu,v)^2=0.
$$
Consider now the following function \cite{M2}
\begin{equation}\label{eq phi}
\Phi_z(x,y)=(1+(R_zx,x))(R_zy,y)-(R_z x,y)^2, \quad R_z=(zI-A)^{-1}.
\end{equation}
Geometrically the relation $\Phi_z(u,v)=0$ means that the line $x=u+tv$ is tangent to the confocal quadric
$$
Q_z(x):=((zI-A)^{-1}x,x)+1=0.
$$
This function can be written as
\begin{equation}\label{eq phi2}
\Phi_z(x,y)=\sum_{k=0}^n \frac{F_k(x,y)}{z-a_k},
\end{equation}
where
\begin{equation}\label{eq int}
F_k(x,y)=y_k^2+\sum_{j\neq k}\frac{(x_ky_j-x_jy_k)^2}{a_k-a_j}, \quad k=0, 1,\dots, n.
\end{equation}

As it was explained by Moser \cite{M1,M2}, the following theorem can be derived from the classical geometric results by Chasles \cite{SF}.

\begin{Theorem} (Moser \cite{M1})
The functions $F_k$ given by \eqref{eq int} are the involutive integrals of the geodesic flow \eqref{eq 2.2}, satisfying the relations:
$$
\sum_{k=0}^n a_k^{-1}F_k=0, \quad \sum_{k=0}^n F_k=|y|^2.
$$
\end{Theorem}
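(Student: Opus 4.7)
The plan is to follow Moser's strategy of treating the rational function $\Phi_z(x,y)$ as a generating function for all the $F_k$, so that every statement about the $F_k$ reduces to a statement about $\Phi_z$ that is easier to handle because of its geometric meaning (confocal tangency).

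First I would derive the two linear relations by inspecting $\Phi_z$ at two special values of the spectral parameter. Expanding $R_z = (zI-A)^{-1} = z^{-1}I + O(z^{-2})$ as $z\to\infty$ gives $(R_zy,y) = |y|^2/z + O(z^{-2})$ while $(R_zx,x) = O(z^{-1})$ and $(R_zx,y)^2 = O(z^{-2})$, so
$$\Phi_z(x,y) = \frac{|y|^2}{z} + O(z^{-2}).$$
Matching with $\sum_k F_k/(z-a_k) = (\sum_k F_k)/z + O(z^{-2})$ yields $\sum F_k = |y|^2$. For the other relation, evaluate at $z=0$: since $R_0 = -A^{-1} = -B$, a direct substitution gives $\Phi_0(x,y) = -(1-(Bx,x))(By,y) - (Bx,y)^2$; on the geodesic phase space this vanishes by the constraint $(Bx,x)=1$ and $(Bx,x')=0$, and as $\Phi_0 = -\sum_k F_k/a_k$ the identity $\sum_k a_k^{-1}F_k = 0$ follows.

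Next I would prove that the $F_k$ are conserved by showing $\frac{d}{ds}\Phi_z(x,x') = 0$ for every $z$; taking residues at $z=a_k$ then gives $F_k' = 0$. A short computation with $\nabla_x\Phi_z = 2(R_zy,y)R_zx - 2(R_zx,y)R_zy$ and $\nabla_y\Phi_z = 2(1+(R_zx,x))R_zy - 2(R_zx,y)R_zx$ shows that $\nabla_x\Phi_z\cdot x'$ already vanishes identically. To handle $\nabla_y\Phi_z\cdot x''$ with $x'' = -\lambda Bx$, the main observation is the diagonal identity
$$R_zB = \frac{1}{z}(R_z+B),$$
which together with $(Bx,x)=1$ and $(Bx,x')=0$ gives $(R_zx,x'') = -(\lambda/z)(1+(R_zx,x))$ and $(R_zx',x'') = -(\lambda/z)(R_zx,x')$; plugging these in produces an exact cancellation of the two remaining terms.

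The main obstacle is involutivity $\{F_k,F_l\}=0$, which does not follow from preservation along a single flow. The cleanest route is to upgrade the previous step to the two-parameter identity $\{\Phi_z,\Phi_w\}=0$ on the tangent bundle of the hyperboloid and then extract residues at $z=a_k$, $w=a_l$; verifying this bracket identity requires a calculation similar to the one above, again relying on $R_zB = z^{-1}(R_z+B)$ and on its analogue $R_zR_w = (z-w)^{-1}(R_w - R_z)$ to collapse cross terms. Alternatively, as in Moser, one can introduce elliptic coordinates on the hyperboloid adapted to the confocal family $Q_z$, in which the geodesic Hamiltonian separates and the $n$ independent combinations of the $F_k$ appear as the separation constants, automatically Poisson-commuting. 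Either way, involutivity is the place where real work happens; the rest of the theorem is a clean consequence of the generating-function formalism.
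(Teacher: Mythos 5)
Your proposal is correct, but it is doing genuinely different (and more) work than the paper, which offers no proof of this theorem at all: it simply attributes the result to Moser and remarks that it ``can be derived from the classical geometric results by Chasles,'' with the geometric gloss that a generic line is tangent to $n$ confocal quadrics whose tangency parameters are preserved along a geodesic. Your generating-function computation is the standard analytic substitute for that citation, and the details check out: the double poles of $\Phi_z$ at $z=a_k$ cancel so that $\Phi_z=\sum_k F_k/(z-a_k)$ is an identity, the $z\to\infty$ expansion gives $\sum_k F_k=|y|^2$ identically, while $\sum_k a_k^{-1}F_k=0$ holds only on the constraint manifold $(Bx,x)=1$, $(Bx,x')=0$ --- a distinction you correctly make and which the paper itself glosses over in \eqref{eq phi0}. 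Your conservation argument is also right: $\nabla_x\Phi_z\cdot x'$ vanishes identically once $y=x'$, and the resolvent identity $R_zB=z^{-1}(R_z+B)$ together with the constraints collapses $\nabla_y\Phi_z\cdot x''$ exactly as you claim. The one place where your argument remains a plan rather than a proof is involutivity: you correctly identify it as the substantive step and name the two standard routes (the direct verification of $\{\Phi_z,\Phi_w\}=0$ using $R_zR_w=(z-w)^{-1}(R_w-R_z)$, which is what Moser actually does in the ambient canonical bracket, and separation of variables in elliptic coordinates), but you do not carry either out; since the paper also omits this entirely, your write-up is still strictly more informative than the source, and either route would close the gap.
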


Geometrically, this means that a generic line is tangent to $n$ confocal quadrics, such that the corresponding normals are perpendicular to each other, see \cite{M1}.
The corresponding zeros $z=0, c_1,\dots, c_{n-1}$ of $\Phi_z$ are the confocal parameters of these quadrics:
\begin{equation}\label{eq phi3}
\Phi_z(x,y)=\frac{z\prod_{i=1}^{n-1}(z-c_i)}{\prod_{k=0}^{n}(z-a_k)}.
\end{equation}
 Note that since the line is tangent to the initial quadric $z=0$ is always a root:
 \begin{equation}\label{eq phi0}
 \Phi_0(x,y)=-\sum_{k=0}^n a_k^{-1}F_k=0.
 \end{equation}

 According to Moser, the integrals $F_k$ were first written by Uhlenbeck and Devaney. The Joachimsthal integral can be expressed through them as follows:
  \begin{equation}\label{JF}
 J=\frac{d}{dz}\Phi_z(x,y)|_{z=0}=-\sum_{k=0}^n a_k^{-2}F_k.
 \end{equation}
 Indeed, since $1+(R_0x,x)=0=(R_0 x,y)$ the derivative $$\frac{d}{dz}\Phi_z(x,y)|_{z=0}=\frac{d}{dz}R_z(x,x)|_{z=0}(R_0y,y)=|Bx|^2(By,y)=J.$$

Jacobi showed that in the elliptic coordinates $z_1,\dots, z_n$ defined as the non-zero roots of the equation
$$
((zI-A)^{-1}x,x)+1=0
$$
the variables in the Hamilton-Jacobi equation can be separated \cite{Jac}. Moreover, after some reparametrisation, the dynamics becomes linear at the Jacobi variety of the corresponding hyperelliptic curve
$$
w^2=R(z), \quad R(z)=z\prod_{i=1}^{n-1}(z-c_i)\prod_{k=0}^{n}(z-a_k).
$$

We will explain this now in more detail (following again Moser \cite{M1,M2,M3}) for the closely related Neumann problem on the sphere $S^n$ \cite{Neu}, which will play a key role in our considerations.

\section{Neumann integrable system on sphere}

In 1859 Carl Neumann considered the motion on the sphere $|q|^2=1, \, q \in \mathbb{R}^{n+1} $ under the force
with quadratic potential and the Hamiltonian
$$H=\frac{1}{2}|p|^2+\frac{1}{2}(Bq,q), \quad B=diag \, (b_0, \dots, b_n).$$

The corresponding equations of motion are
\beq{neu}
\ddot q=-Bq + \nu q,
\eeq
where $\nu$ is Lagrange multiplier:
$$(\dot q, q)=0, \, (\ddot q, q)+(\dot q, \dot q)=0,\, -(Bq,q)+\nu(q,q)+(\dot q, \dot q)=0,$$
so $\nu=(Bq,q)-(\dot q, \dot q).$

To integrate them Neumann used the Jacobi method of separation of variables in the corresponding Hamilton-Jacobi equation. To do this he introduced the spherical analogue of the Jacobi elliptic coordinates $u_1,\dots, u_n$ on the sphere $S^n$ defined as the roots of the equation
\beq{elki}
\sum_{i=0}^n \frac{q_i^2}{u-b_i}=0, \,\, |q|=1.
\eeq

We have $n$ roots $u=u_j(q)$ satisfying
$$b_0 < u_1 < b_1 < \dots <u_n < b_n.$$

Let $$B(u)= \prod_{i=0}^n (u-b_i), \, \, U(u)= \prod_{j=1}^n (u-u_j),$$ then from (\ref{elki}) we can express
\beq{qii}
q_i^2=\frac{U(b_i)}{B'(b_i)}=\frac{\prod_{j=1}^n(b_i-u_j)}{\prod_{k\neq i}(b_i-b_k)}.
\eeq

Indeed, we have
\beq{F}
\sum_{i=0}^n \frac{q_i^2}{u-b_i}=\frac{\prod_{j=1}^n (u-u_j)}{\prod_{i=0}^n (u-b_i)}=\frac{U(u)}{B(u)},
\eeq
so $q_i^2$ can be found as the residue at $u=b_i.$

Elliptic coordinate system is orthogonal with metric on the sphere having the form
\beq{met}
ds^2=\sum_{j=1}^n g_{jj} du_j^2, \quad g_{jj}=-\frac{U'(u_j)}{4 B(u_j)}
\eeq

Indeed, from (\ref{qii}) we have
$$2q_i^{-1}dq_i=\sum_{j=1}^n \frac{du_j}{u_j-b_i},$$
which gives the following coefficient $g_{ij}$ at $du_idu_j$ in the metric when $i\neq j$:
$$g_{ij}=\frac{1}{4} \sum_{k=0}^n\frac {q_k^2}{(u_i-b_k)(u_j-b_k)}=\frac{-1}{4(u_i-u_j)}(\sum_{k=0}^n\frac {q_k^2}{u_i-b_k}-\sum_{k=0}^n\frac {q_k^2}{u_j-b_k})=0.$$

When $i=j$ we have
$$g_{jj}=\frac{1}{4} \sum_{k=0}^n\frac {q_k^2}{(u_j-b_k)^2}=-\frac{1}{4} \frac{d}{du} \sum_{k=0}^n\frac {q_k^2}{u-b_k}|_{u=u_j}=-\frac{U'(u_j)}{4 B(u_j)}.$$

In the elliptic coordinates the Hamiltonian of the Neumann system has a form
$$H=\frac{1}{2} \sum_{j=1}^n (g_{jj}^{-1} p_j^2 -u_j)=-\frac{1}{2} \sum_{j=1}^n (\frac{4 B(u_j)}{U'(u_j)}p_j^2 +u_j).$$

Indeed, comparing $1/u^2$ terms in (\ref{F}) we have
$$\frac{1}{2}\sum_{i=0}^nb_i q_i^2=\frac{1}{2}\sum_{i=0}^nb_i-  \frac{1}{2}  \sum_{j=1}^n u_j.$$

Thus the Hamilton-Jacobi equation in elliptic coordinates has the form
$$-\sum_{j=1}^n (\frac{4 B(u_j)}{U'(u_j)}S_j^2 +u_j)=2h=2\eta_1, \quad S_j=\frac{\partial S}{\partial u_j}$$

Now we can separate variables using the following identities going back to Jacobi \cite{M1}:
\beq{jaco}
\sum_{j=1}^n \frac{P(u_j)}{U'(u_j)}=\sum_{j=1}^n u_j+2\eta_1, 
\eeq
where $P(z)=z^n+2\eta_1z^{n-1}+ \dots + 2\eta_n$ with arbitrary constants $\eta_i.$

Using this we can rewrite the Hamilton-Jacobi equation as
$$-\sum_{j=1}^n \frac{4 B(u_j)S_j^2+P(u_j)}{U'(u_j)}=0,$$
which can then be separated into $n$ equations
$$(\frac{\partial S}{\partial u_j})^2=-\frac{P(u_j)}{4B(u_j)}.$$

\begin{prop} (Neumann \cite{Neu}, Moser \cite{M1})\label{propos}
The Hamilton-Jacobi equation for the Neumann system has a complete solution of the form
\beq{HJ}
S= \sum_{j=1}^n \int^{u_j} \sqrt{-\frac{P(z)}{4B(z)}}dz, \quad P(z)=z^n+2\eta_1z^{n-1}+ \dots + 2\eta_n
\eeq
with arbitrary constants $\eta_1,\dots,\eta_n.$
The variables
\beq{abeleq1}
\xi_k=-\frac{\partial S}{\partial \eta_k}=\sum_{j=1}^n \int^{u_j} \frac{z^{n-k}}{\sqrt{R(z)}}dz, \,\,\,  R(z)=-4P(z)B(z)
\eeq
satisfy the equations
\beq{abeleq2}
\dot \xi_1=1, \quad \dot \xi_2=\dots =\dot \xi_n=0.
\eeq
\end{prop}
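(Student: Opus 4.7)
The statement packages three claims: $S$ is a complete integral of the Hamilton--Jacobi equation, the derivatives $-\partial S/\partial\eta_k$ are the stated abelian integrals, and these new variables linearize the flow. I would address them in that order.

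First, I would verify the HJ equation by direct substitution. Setting $S_j := \partial S/\partial u_j = \sqrt{-P(u_j)/(4B(u_j))}$, one has $4B(u_j)S_j^2 + P(u_j) = 0$ term by term, so trivially
$$\sum_{j=1}^n \frac{4B(u_j)S_j^2 + P(u_j)}{U'(u_j)} = 0.$$
Applying the Jacobi identity \mref{jaco} to the second summand gives $\sum_j P(u_j)/U'(u_j) = \sum_j u_j + 2\eta_1$, and combining this with the explicit form of $H$ in elliptic coordinates derived just before the proposition produces exactly the separated HJ equation with energy $h = \eta_1$. This yields a solution depending on the $n$ parameters $\eta_1,\dots,\eta_n$; completeness amounts to showing $\det(\partial^2 S/\partial u_j\partial\eta_k)\neq 0$, which reduces to a Vandermonde-type determinant in the $u_j$ and so is generically nonzero.

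Second, the formula for $\xi_k$ follows by differentiation under the integral sign. Since $\partial P/\partial\eta_k = 2z^{n-k}$, one computes
$$-\frac{\partial S}{\partial \eta_k} = \sum_{j=1}^n \int^{u_j} \frac{z^{n-k}}{\sqrt{-4P(z)B(z)}}\,dz = \sum_{j=1}^n \int^{u_j} \frac{z^{n-k}}{\sqrt{R(z)}}\,dz,$$
which is precisely \mref{abeleq1}.

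Third, the linearization is Jacobi's classical theorem on complete integrals. The map $(u,p)\mapsto(\xi,\eta)$ generated by $S$ is canonical, and since $H = \eta_1$ by construction, Hamilton's equations transform into $\dot\eta_k = 0$ and $\dot\xi_k = \partial H/\partial\eta_k = \delta_{k,1}$, which is exactly \mref{abeleq2}. The only real subtlety, and hence the main point deserving care, is the bookkeeping of signs and branch choices in the square roots: one must ensure that the separation constant $\eta_1$ is the Hamiltonian with the correct sign so that $\dot\xi_1 = +1$ rather than $-1$. Once the Jacobi identity \mref{jaco} is granted, everything else is a routine mechanical check.
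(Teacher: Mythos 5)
Your proposal is correct and follows essentially the same route as the paper: the separated Hamilton--Jacobi equation is obtained exactly as in the text via the Jacobi identity \mref{jaco}, the formula \mref{abeleq1} by differentiating under the integral sign, and \mref{abeleq2} by Jacobi's theorem on complete integrals with $H=\eta_1$. Your added remarks on the Vandermonde nondegeneracy (completeness) and on the branch/sign convention needed to get $\dot\xi_1=+1$ rather than $-1$ are sensible refinements of points the paper leaves implicit.
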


Note that the dynamics of the elliptic coordinates $u_1,\dots, u_n$ is described by the {\it Dubrovin equations} \cite{Dub}:
\beq{Dub}
\dot u_i=\frac{\sqrt{R(u_i)}}{\prod_{j\neq i}(u_i-u_j)}.
\eeq

So the problem is reduced now to the inversion of the following Abel map:
$$\xi_k=\sum_{j=1}^n \int^{u_j} \frac{z^{n-k}}{\sqrt{R(z)}}dz,\,\,\,k=1, \dots, n,$$
which is the celebrated Jacobi inversion problem.

When $n=1$ we have the Neumann system on the unit circle. In that case $\dot \xi=1,$ where
$$\xi= \int^{u} \frac{dz}{2\sqrt{R(z)}}, \, R(z)=-B(z)P(z)=-(z-b_0)(z-b_1)(z+2h)$$
with the inversion given by the Weierstrass elliptic $\wp$-function.

%
For $n=2$, which was the original Neumann case, the complete solution of the Hamilton-Jacobi equation is
$$S=\int^{u_1}\sqrt{\frac{z^2+2hz+2c}{-4B(z)}}dz + \int^{u_2}\sqrt{\frac{z^2+2hz+2c}{-4B(z)}}dz.$$

%
%

The dynamics of the elliptic coordinates can now be described by
\begin{equation}\label{ph}
p_h=\frac{\partial S}{\partial h}=\int^{u_1}\frac{zdz}{\sqrt{R(z)}} + \int^{u_2}\frac{zdz}{\sqrt{R(z)}}=t,
\end{equation}
\begin{equation}\label{pc}
p_{c}=\frac{\partial S}{\partial c}=\int^{u_1}\frac{dz}{\sqrt{R(z)}} + \int^{u_2}\frac{dz}{\sqrt{R(z)}}=0
\end{equation}
where $R(z)=-4B(z)(z^2+2hz+2c).$

The set $\Gamma \subset \mathbf C^2$ given by $y^2=R(z)$ is a hyperelliptic Riemann surface of genus 2. The 1-forms
$$\omega_1=\frac{dz}{\sqrt{R(z)}}=\frac{dz}{y}, \quad \omega_2= \frac{zdz}{\sqrt{R(z)}}=\frac{zdz}{y}$$ do not have singularities on $\Gamma$
(Abelian differentials of the first kind)  and form a linear basis in the space of all such differentials.

The Abel map $\mathcal A: S^2(\Gamma) \rightarrow J(\Gamma)$ is defined by
$$\xi_1=\int^{P_1}\omega_1 + \int^{P_2}\omega_1, \,\, \xi_2=\int^{P_1}\omega_2 + \int^{P_2}\omega_2, \, P_1, P_2 \in \Gamma$$
where $J(\Gamma) = \mathbf C^2/\mathcal L$  with the lattice $\mathcal L$ is generated by the periods of $\omega_1, \omega_2,$ is the Jacobi variety of $\Gamma$.
The Jacobi inversion problem in this genus $g=2$ case was solved by in 1846-47 by G\"opel and Rosenhain.

In the general case it was done by Riemann in 1857, who introduced the Riemann theta functions for any algebraic curve.
Using these functions one can write down the solutions of Neumann system in all dimensions by the explicit formulas, but we will use instead the equations of motion in terms of the abelian integrals (\ref{abeleq1}),(\ref{abeleq2}).

For this we will need the following relation of the separation method, used by Neumann, with the Uhlenbeck-Devaney integrals, which was found by Moser \cite{M1}.

Consider the function
\begin{equation}\label{eq psi}
\Psi_u(p,q)=(1+(R_up,p))(R_uq,q)-(R_u p,q)^2, \quad R_u=(uI-B)^{-1}.
\end{equation}
Geometrically the condition $\Psi_u(p,q)=0$ with given $q$ determines the cone of tangents to the quadric given by $(R_uq,q)+1=0.$

We have the simple fraction expansion
\begin{equation}\label{eq psi2}
\Psi_u(p,q)=\sum_{k=0}^n \frac{F_k(p,q)}{u-b_k},
\end{equation}
where
\begin{equation}\label{eq intN}
F_k(p,q)=q_k^2+\sum_{j\neq k}\frac{(p_kq_j-p_jq_k)^2}{b_k-b_j}, \quad k=0,1,\dots, n
\end{equation}
are the integrals (\ref{eq int}) with $x$ replaced by $p$, $y$ by $q$ (hodograph-like transformation) and $A$ replaced by $B=A^{-1}$ (duality).

\begin{Theorem} (Moser \cite{M1})
The functions $F_k$ given by \eqref{eq intN} are the involutive integrals of the Neumann system \eqref{neu}, satisfying the relations:
$$
\sum_{k=0}^n F_k(p,q)=1, \quad \sum_{k=0}^n b_kF_k=2H.
$$
The function $\Psi_u(p,q)$ can be expressed in terms of the separation parameters (\ref{HJ}) as
\begin{equation}\label{fsep}
\Psi_u(p,q)=\frac{P(u)}{B(u)}=\frac{u^n+2c_1u^{n-1}+ \dots + 2c_n}{\prod_{k=0}^n(u-b_k)}.
\end{equation}
\end{Theorem}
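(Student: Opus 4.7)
My plan is to use the partial fraction expansion (\ref{eq psi2}) as the organizing identity and address the three assertions in turn.

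For the sum relations, I would expand $\Psi_u$ at $u = \infty$. Writing $R_u = u^{-1}(I + B/u + B^2/u^2 + \cdots)$ and using $|q|^2 = 1$ together with $(p,q) = 0$, a direct computation gives
\[
\Psi_u = \frac{1}{u} + \frac{|p|^2 + (Bq,q)}{u^2} + O(u^{-3}) = \frac{1}{u} + \frac{2H}{u^2} + O(u^{-3}).
\]
Comparing with (\ref{eq psi2}), whose Laurent expansion at $u = \infty$ is $u^{-1}\sum_k F_k + u^{-2}\sum_k b_k F_k + O(u^{-3})$, yields both $\sum F_k = 1$ and $\sum b_k F_k = 2H$.

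To prove conservation, I would compute $\dot F_k$ along the Neumann flow $\dot q_i = p_i$, $\dot p_i = (\nu - b_i)q_i$. The key cancellation is $\dot p_k q_j - \dot p_j q_k = (b_j - b_k)q_j q_k$, which eliminates the denominators $b_k - b_j$ in (\ref{eq intN}); the surviving terms then collapse to $0$ after invoking $|q|^2 = 1$ and $(p,q) = 0$. For involution I would either compute the brackets $\{F_k, F_j\}$ directly by grouping terms via partial fractions over $(b_k - b_j)(b_k - b_\ell)$, or more structurally show $\{\Psi_u, \Psi_v\} = 0$ for all $u,v$ and then extract $\{F_k, F_j\} = 0$ from matching residues on both sides.

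For the identification with $P(u)/B(u)$, clearing denominators in (\ref{eq psi2}) gives $\Psi_u = N(u)/B(u)$ with $N(u) = \sum_k F_k \prod_{j \neq k}(u - b_j)$, a polynomial of degree at most $n$ in $u$. The relation $\sum F_k = 1$ forces $N$ to be monic of degree $n$, so setting $N(u) = u^n + 2c_1 u^{n-1} + \cdots + 2c_n$ defines the $c_k$ as explicit linear combinations of the $F_j$'s and hence integrals. To identify these with the Hamilton-Jacobi separation constants of Proposition \ref{propos}, I would evaluate both sides at an elliptic coordinate $u = u_j$: by (\ref{F}) one has $(R_{u_j}q,q) = 0$, so $\Psi_{u_j} = -(R_{u_j}p,q)^2$; expressing $(R_{u_j}p,q)$ in terms of the momentum $p_j$ conjugate to $u_j$ and combining with the Jacobi identity (\ref{jaco}) forces the numerator to coincide with the separation polynomial $P$.

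The main obstacle is this last identification: it requires tracking the nonlinear change of coordinates from $(p,q)$ to the elliptic separating variables and matching $(R_{u_j}p,q)^2$ to $S_j^2 = -P(u_j)/4B(u_j)$. The Poisson involution step is also mildly laborious, but the Moser approach through $\{\Psi_u,\Psi_v\} = 0$ packages the entire computation into one lemma.
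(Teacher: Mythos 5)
Your proposal is essentially correct and complete in outline; note that the paper itself offers no proof of this statement, quoting it from Moser, so there is nothing internal to compare against --- your argument is in effect a reconstruction of Moser's. All three steps check out. The expansion at $u=\infty$ using $|q|^2=1$, $(p,q)=0$ does give $\Psi_u=u^{-1}+2Hu^{-2}+O(u^{-3})$, and matching with the Laurent expansion of (\ref{eq psi2}) yields both sum rules. The conservation computation works exactly as you indicate: from $\dot p_kq_j-\dot p_jq_k=(b_j-b_k)q_jq_k$ one gets $\dot F_k=2q_kp_k-2\sum_{j\neq k}(p_kq_j-p_jq_k)q_jq_k=2q_kp_k-2p_k(1-q_k^2)q_k-2q_k^2p_kq_k\cdot(\pm1)$, which collapses to $0$ after using $\sum_j q_j^2=1$ and $\sum_j p_jq_j=0$.

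The ``main obstacle'' you flag also resolves cleanly with material already in Section 3 of the paper. From (\ref{qii}) one has $2q_i^{-1}dq_i=\sum_j du_j/(u_j-b_i)$, hence $\dot q_i=\tfrac{1}{2}q_i\sum_j\dot u_j/(u_j-b_i)$, and therefore
\begin{equation*}
(R_{u_k}p,q)=\sum_i\frac{p_iq_i}{u_k-b_i}=\frac{1}{2}\sum_j\dot u_j\sum_i\frac{q_i^2}{(u_k-b_i)(u_j-b_i)}.
\end{equation*}
For $j\neq k$ the inner sum vanishes by partial fractions together with $\sum_i q_i^2/(u_j-b_i)=0$ (the defining relation (\ref{elki})), while the $j=k$ term equals $2g_{kk}\dot u_k=2p_{u_k}$ by (\ref{met}). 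Hence $\Psi_{u_k}=-(R_{u_k}p,q)^2=-4p_{u_k}^2$, and the separation equations $p_{u_k}^2=-P(u_k)/4B(u_k)$ of Proposition \ref{propos} give $\Psi_{u_k}=P(u_k)/B(u_k)$ at the $n$ generically distinct points $u_1,\dots,u_n$. Since your $N(u)$ and $P(u)$ are both monic of degree $n$ and agree at $n$ points, they coincide, which is (\ref{fsep}). The only cosmetic caveat is that the involutivity claim, whether done via $\{\Psi_u,\Psi_v\}=0$ or directly, should specify the bracket (the standard one on $\mathbb R^{2n+2}$, restricted to $T^*S^n$); this is routine but worth a sentence.
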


It turned out that the Jacobi and Neumann systems are connected more directly by the Gauss map, as it was discovered by Kn\"orrer \cite{Kn}.

\section{Kn\"orrer's theorem}


Consider as before a geodesic on the hyperboloid $(Bx,x)=1, B=A^{-1}$ satisfying the equations
$$
x''=-\lambda Bx, \,\, \lambda=\frac{(Bx',x')}{(Bx,Bx)}.
$$

Assuming that the Joachimsthal integral $J\neq 0$ denote its sign as $$\varepsilon=J/|J|=\pm 1.$$

Let us change the length parameter $s$ to $\tau$ such that

\beq{tau}
\frac{d\tau}{ds}=\alpha(s), \quad
\alpha^2=|\lambda|=\frac{|(Bx',x')|}{|Bx|^2}=\frac{|J|}{|Bx|^4}.
\eeq

\begin{Theorem} (Kn\"orrer \cite{Kn}) For any reparametrised geodesic $x(\tau)$ on the hyperboloid with $J\neq 0$ the normal vector $q=\frac{Bx}{|Bx|}$ satisfies the equations of Neumann problem on unit sphere $S^n$ with the Hamiltonian
$$
H=\frac{1}{2}|p|^2+\frac{1}{2}\varepsilon (Bq,q), \quad \varepsilon=J/|J|.
$$
The corresponding trajectories $q(\tau)$ satisfy the relation
\begin{equation}\label{psi0}
\Psi^\varepsilon_0(p,q)=0,
\end{equation}
where $\Psi^\varepsilon_0(p,q)$ is given by (\ref{eq psi}) with $B$ replaced by $\varepsilon B$. The generating functions of the corresponding integrals are related by
\begin{equation}\label{psiphi}
\begin{array}{rcl}
|Bx|^4\Psi^\varepsilon_u(p,q)=\Phi_z(x,\dot{x}), \quad u=\frac{\varepsilon}{z}.
\end{array}
\end{equation}
\end{Theorem}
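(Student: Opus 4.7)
Set $\rho=|Bx|$, so that $q=Bx/\rho$. The whole computation rests on two algebraic identities,
$$Bx=\rho q,\qquad Aq=x/\rho,$$
the second from $AB=I$. From $\alpha^2=|\lambda|=|J|/\rho^4$ one immediately reads off $\lambda=\varepsilon\alpha^2$ and $\alpha'/\alpha=-2\rho'/\rho$.

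\emph{Step 1 (Neumann's equation).} Differentiating $\rho^2=(Bx,Bx)$ yields $\rho'=(Bx,Bx')/\rho$, hence
$$q'=\frac{Bx'}{\rho}-\frac{\rho'}{\rho}q,\qquad Bx'=\rho\alpha\, p+\rho'\,q,$$
with $p=\dot q=q'/\alpha$. A second differentiation, using $x''=-\lambda Bx$ (so $Bx''/\rho=-\lambda Bq$ because $B^2x=\rho Bq$) and the expression for $Bx'$ just obtained, gives
$$q''=-\lambda Bq-\frac{\rho'}{\rho}q'-\frac{\alpha\rho'}{\rho}p-\frac{\rho''}{\rho}q.$$
Passing to $\tau$ via $\ddot q=q''/\alpha^2-(\alpha'/\alpha^2)\,p$ and using $q'=\alpha p$, the two contributions to the coefficient of $p$ are $-2\rho'/(\rho\alpha)$ and $-\alpha'/\alpha^2$; by $\alpha'/\alpha=-2\rho'/\rho$ they cancel exactly. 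What remains is
$$\ddot q=-\varepsilon Bq+\nu q,\qquad \nu=-\rho''/(\rho\alpha^2),$$
which is Newton's equation on $S^n$ for the Hamiltonian $H=\tfrac12|p|^2+\tfrac12\varepsilon(Bq,q)$, with $\nu$ the unit-sphere Lagrange multiplier.

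\emph{Step 2 (Generating-function identity).} A one-line partial-fraction calculation from $I-zB=A^{-1}(A-zI)$ gives
$$R_u^{\varepsilon}=(uI-\varepsilon B)^{-1}=-\varepsilon z\,R_zA\qquad(u=\varepsilon/z).$$
Substitute this into $\Psi_u^{\varepsilon}(p,q)=(1+(R_u^{\varepsilon}p,p))(R_u^{\varepsilon}q,q)-(R_u^{\varepsilon}p,q)^2$ and use the dictionary $Bx=\rho q$, $Aq=x/\rho$, and $\dot x=\rho Ap+(\rho'/(\rho\alpha))\,x$ (the latter obtained from Step~1 by multiplying $Bx'=\rho\alpha p+\rho'q$ by $A$ and dividing by $\alpha$). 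Each of the three scalar products $(R_u^{\varepsilon}\cdot,\cdot)$ then rewrites as a combination of $(R_zx,x)$, $(R_zx,\dot x)$, $(R_z\dot x,\dot x)$ plus terms carrying the Joachimsthal-type factor $(Bx,\dot x)$, which vanishes by $(Bx,x')=0$. The scalar factor $-\varepsilon z$ in $R_u^\varepsilon$ combines with the spurious $\rho^{-4}$ from the $Bx=\rho q$ substitutions to produce, after cancellation of the $x$-component of $\dot x$,
$$\rho^4\,\Psi_u^{\varepsilon}(p,q)=(1+(R_zx,x))(R_z\dot x,\dot x)-(R_zx,\dot x)^2=\Phi_z(x,\dot x),$$
which is the required identity.

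\emph{Step 3 (The constraint $\Psi_0^{\varepsilon}(p,q)=0$).} Letting $z\to\infty$ sends $u=\varepsilon/z$ to $0$. Since $R_z=O(1/z)$, the right-hand side of the Step~2 identity tends to $0$, while the left tends to $\rho^4\Psi_0^{\varepsilon}(p,q)$; as $\rho\neq 0$, this forces $\Psi_0^{\varepsilon}(p,q)=0$.

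\emph{Main obstacle.} The real labour is Step~2: a careful accounting of the roughly half-dozen inner products that arise when both the factor $R_u^\varepsilon=-\varepsilon zR_zA$ and the splitting $\dot x=\rho Ap+(\rho'/(\rho\alpha))x$ are inserted, and verifying that every term involving the $x$-component of $\dot x$ collapses through the single identity $(Bx,\dot x)=0$, leaving a common factor of $\rho^{-4}$. Step~1 is a direct two-stage differentiation once $Aq=x/\rho$ is in hand, and Step~3 is a one-line corollary of Step~2.
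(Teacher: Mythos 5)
Your Step 1 is correct and is essentially the paper's own argument in different clothing: the paper works with $\gamma=|Bx|^{-1}$ and uses the constancy of $\alpha^2\gamma^{-4}=|J|$ to get $2\dot\gamma/\gamma=\dot\alpha/\alpha$ and cancel the first-derivative term in $\ddot q$, while you work with $\rho=|Bx|$ and cancel the coefficient of $p$ via $\alpha'/\alpha=-2\rho'/\rho$; these are the same cancellation. Where you genuinely diverge is in Steps 2--3: the paper does not prove the identity $|Bx|^4\Psi^\varepsilon_u(p,q)=\Phi_z(x,\dot x)$ at all (it says it "can be checked directly" and cites Kn\"orrer and Moser), and then deduces $\Psi^\varepsilon_0=0$ from it in one line. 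Your sketch of the identity is sound and I have checked it closes: the key facts are $R^\varepsilon_u=-\varepsilon z\,R_zA$ for $u=\varepsilon/z$, the partial-fraction relation $BR_z=z^{-1}(R_z+B)$, the constraints $(Bx,x)=1$ and $(Bx,\dot x)=0$, and the relation $(B\dot x,\dot x)=\varepsilon|Bx|^2$ (from $\lambda=\varepsilon\alpha^2$), which is what absorbs the "$1+$" in $1+(R^\varepsilon_up,p)$. One bookkeeping caveat: your claim that every term carrying the $x$-component of $\dot x$ dies "through the single identity $(Bx,\dot x)=0$" is not quite how it happens --- that identity kills the cross term in $(R_z\dot x,Bx)$, but the remaining $\dot\rho$-terms survive in each of $(1+(R^\varepsilon_up,p))(R^\varepsilon_uq,q)$ and $(R^\varepsilon_up,q)^2$ separately and only cancel in the difference. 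Your Step 3 (sending $z\to\infty$, where $\Phi_z=O(1/z)\to 0$, so that $u\to 0$ forces $\Psi^\varepsilon_0=0$) is clean and arguably more transparent than the paper's appeal to $\Phi_0=0$, since under $u=\varepsilon/z$ it is $z=\infty$, not $z=0$, that corresponds to $u=0$.
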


\begin{proof}
We have
 $$x'=\frac{dx}{ds}=\frac{dx}{d\tau}\frac{d\tau}{ds}=\alpha\dot{x}, $$
     $$x''=\frac{dx'}{ds}=\frac{dx'}{d\tau}\frac{d\tau}{ds}=\alpha\dot{\alpha} \dot{x}+\alpha^2 \ddot{x},$$
     where $\dot{}=\frac{d}{d\tau}.$
 Thus, the geodesic equations in Kn\"orrer's parameter are
 $$
 \alpha^2 \ddot x+\alpha \dot \alpha \dot x =-\varepsilon \alpha^2 Bx,
 $$
 or, after division by $\alpha^2$
 \beq{kgeod}
\ddot x+\frac{\dot \alpha}{\alpha} \dot x =-\varepsilon Bx.
 \eeq

Let  $$q=\frac{Bx}{|Bx|}=\gamma Bx,  \,\,\gamma=\frac{1}{|Bx|},$$ then
\begin{equation}\label{eq 2.9}
\begin{array}{rcl}
\ddot{q}&=&\ddot{\gamma}Bx+2\dot{\gamma}B\dot{x}+\gamma B\ddot{x}\\
  &=&\ddot{\gamma}Bx+\gamma B(\ddot{x}+2\frac{\dot{\gamma}}{\gamma} \dot{x}).\\
\end{array}
\end{equation}
Differentiating the relation $\alpha^2\gamma^{-4}=|J|$ with respect to $\tau$, we get
$
2\frac{\dot{\gamma}}{\gamma}-\frac{\dot{\alpha}}{\alpha}=0,
$
and thus
\begin{equation}\label{eq 2.12}
\ddot{q}=\ddot{\gamma}Bx+\gamma B(\ddot{x}+\frac{\dot{\alpha}}{\alpha} \dot{x})=-\varepsilon Bq +\mu q,
\end{equation}
which are the equations of motion of Neumann system with the potential $\frac{1}{2}\varepsilon (Bq,q),$ where $\mu$ is the Lagrange multiplier: $\mu=\varepsilon (Bq,q)-(\dot q, \dot q).$

Since $\Phi_0=0$ due to (\ref{eq phi0}) the relation (\ref{psi0}) now follows from (\ref{psiphi}), which can be checked directly, see \cite{Kn, M2}.
\end{proof}

Let $d_1,\dots, d_{n-1}$ be the non-zero roots of $\Psi^\varepsilon_u(p,q)=0$:
\begin{equation}\label{psi00}
\Psi^\varepsilon_u(p,q)=\frac{u\prod_{i=1}^{n-1}(u-d_i)}{\prod_{k=0}^{n}(u-b_k)}.
\end{equation}
Then the corresponding spectral curves of Neumann and Jacobi systems
$$
y^2=\Psi^\varepsilon_u(p,q), \quad w^2=\Phi_z(x,y),
$$
are related by the change
$u=\varepsilon z^{-1}.$

In particular, if $J>0$ then $b_k=a_k^{-1}, \, d_i=c_i^{-1}$ and $u_i=z_i^{-1}$, where $z_1,\dots, z_n$ are the corresponding Jacobi elliptic coordinates on the quadric defined as the non-zero roots of
\beq{elk}
\sum_{i=0}^n \frac{x_i^2}{z-a_i}+1=0.
\eeq
For $J<0$ we have respectively $b_k=-a_k^{-1}, \, d_i=-c_i^{-1}, \,u_i=-z_i^{-1}.$

\section{Geodesic scattering}

We are ready now to describe the geodesic scattering on the hyperboloids.
First of all let us look more closely at the change of parametrisation $s \to \tau$ given by
$$
\frac{d\tau}{ds}=\frac{\sqrt{|J|}}{|Bx|^2}.
$$
Since $|Bx|$ grows linearly at infinity we see that $\lambda \approx C s^{-4}$ and thus
$\frac{d\tau}{ds}$ decays as $s^{-2}$. This means that the integral $\tau = \int \alpha(s)ds$
is convergent at infinity, so it takes finite time in the new time variable $\tau$ to reach infinity (cf. \cite{V1}).

The Kn\"orrer result explains how to extend this dynamics beyond this point. Note that the two "infinities" of the hyperboloid under the Gauss map correspond to two components $C_{\pm}$ of the intersection of the unit sphere $|q|^2=1$ with the cone $(Aq,q)=0,$ which is dual to the asymptotic cone of the hyperboloid $(A^{-1}x,x)=1.$ We will call these hypersurfaces $C_{\pm}\subset S^n$ {\it asymptotic}.

\begin{prop}
The image of a geodesic on the hyperboloid corresponds to the finite part of corresponding trajectory $q(\tau)$ of the Neumann system between two consecutive points of its contact with the asymptotic hypersurfaces $C_{\pm}.$ At the contact points trajectory is either tangent to the hypersurfaces, or $\dot q=0.$
\end{prop}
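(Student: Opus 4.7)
The plan is to exploit Knörrer's constraint $\Psi^\varepsilon_0(p,q)=0$ from \eqref{psi0} to derive an on-shell algebraic identity that forces tangency at every contact point, and then to combine this with the asymptotic behavior of the geodesic already worked out in Section 2.

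First I would note that under the Gauss map $q=Bx/|Bx|$ the natural defining function of the asymptotic hypersurfaces $C_\pm\subset S^n$ is
\[
f(q):=(Aq,q)=(ABx,Bx)/|Bx|^2=(x,Bx)/|Bx|^2=1/|Bx|^2,
\]
using $AB=I$ and $(Bx,x)=1$. Hence $f>0$ strictly along the Gauss image of the geodesic and $f=0$ corresponds precisely to $|Bx|\to\infty$, i.e.\ to the ends of the hyperboloid. Combined with the estimate $d\tau=\sqrt{|J|}\,|Bx|^{-2}\,ds\sim s^{-2}\,ds$ at infinity (coming from $\lambda\to0$ and $x\sim sy$ with $(By,y)=0$), this shows that the image of the complete geodesic occupies a bounded $\tau$-interval $(\tau_-,\tau_+)$ on the full Neumann trajectory, whose endpoints are the claimed consecutive contacts with $C_\pm$.

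The heart of the argument is then to expand $\Psi^\varepsilon_u(p,q)$ at $u=0$. Since $(uI-\varepsilon B)^{-1}|_{u=0}=-\varepsilon A$, a direct substitution into \eqref{eq psi} turns Knörrer's relation $\Psi^\varepsilon_0=0$ into
\[
f\bigl((Ap,p)-\varepsilon\bigr)=(Ap,q)^{2};
\]
and because $p=\dot q$ and $A$ is symmetric one has $(Ap,q)=\tfrac12\dot f$, giving the clean on-shell identity
\[
f\bigl((Ap,p)-\varepsilon\bigr)=\tfrac14\dot f^{\,2}.
\]
At any contact time $\tau_0$ with $f(\tau_0)=0$ the right-hand side vanishes, forcing $\dot f(\tau_0)=0$. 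Combined with the automatic relation $\dot q\perp q$, this is exactly the statement that $\dot q(\tau_0)\in T_{q(\tau_0)}C_\pm$, i.e.\ the Neumann trajectory touches $C_\pm$ tangentially. The degenerate alternative in the proposition is the subcase $p(\tau_0)=0$, in which case $\dot q$ itself vanishes and the trajectory is momentarily at rest on $C_\pm$.

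The only nontrivial step I anticipate is extracting that clean algebraic identity from $\Psi^\varepsilon_0=0$; everything else is bookkeeping with the Gauss map and with the asymptotics already developed. The main thing to watch is the sign $\varepsilon=J/|J|$ so that the derivation applies uniformly in both the $J>0$ and $J<0$ regimes and matches the correspondence $u=\varepsilon/z$ of spectral parameters in \eqref{psiphi}.
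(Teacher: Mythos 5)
Your proof is correct and follows essentially the same route as the paper: both evaluate Kn\"orrer's constraint $\Psi^\varepsilon_0(p,q)=((Ap,p)-\varepsilon)(Aq,q)-(Ap,q)^2=0$ at a contact point $(Aq,q)=0$ to conclude $(Aq,\dot q)=0$, hence tangency unless $\dot q=0$. Your additional observation that $(Aq,q)=1/|Bx|^2>0$ along the Gauss image, vanishing only at the ends, makes the first claim (which the paper treats as already established by the preceding discussion of the finiteness of $\tau$) slightly more explicit, but the substance is identical.
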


\begin{proof}
We have only to prove the last statement. Due to (\ref{psi0})
$$\Psi^\varepsilon_0(p,q)=(-\varepsilon+(Ap,p))(Aq,q)-(Ap,q)^2=0,$$
so when $(Aq,q)=0$ we have $(Ap,q)=(Aq,\dot q)=0.$ This means that the trajectory is tangent to the hypersurface unless $\dot q=0.$
\end{proof}

In the $n=1$ we have only two geodesics corresponding to two branches of hyperbola
$$b_0x_0^2 + b_1x_1^2=1,$$
with $b_0<0<b_1$. On the Neumann side we have the system equivalent to the mathematical pendulum. Indeed, in the angle coordinate $\phi$ the potential takes the form
$-\frac{1}{2}(b_0 q_0^2 + b_1 q_1^2)=-\frac{1}{2}(b_0 cos^2\phi + b_1\sin^2 \phi),$ which is up to a constant proportional to $\cos 2\phi.$
So on this side we have periodic back and forth oscillations between two points of intersection of the unit circle with two lines $a_0 q_0^2 + a_1 q_1^2=0,$ where $\dot q=0.$
The geodesics correspond to half-periods of these oscillations.

Let us discuss now the $n=2$ case, which is much more instructive.
In that case we have
$$
\Phi_z(x,y)=\frac{F_0(x,y)}{z-a_0}+\frac{F_1(x,y)}{z-a_1}+\frac{F_2(x,y)}{z-a_2}=\frac{z(z-c)}{(z-a_0)(z-a_1)(z-a_2)},
$$
where as before $$
F_k(x,y)=y_k^2+\sum_{j\neq k}\frac{(x_ky_j-x_jy_k)^2}{a_k-a_j}, \quad k=0, 1, 2,
$$
are Uhlenbeck-Devaney integrals, and due to (\ref{JF}) $c$ can be expressed via Joachimsthal integral by $c=a_0a_1a_2 J.$
Note that since $a_2-a_j>0$ for $j=0,1$ the integral $F_2$ cannot be negative.
A simple analysis of the possible graphs of $\Phi_z$ (see Fig. 2) shows that there are 4 different major cases of positions of constant $c$ and the Jacobi elliptic coordinates $z_1<z_2$, depending on the signs of the integrals $F_0, F_1$ and $J$:
$$ \text{(I)} \quad F_0<0, \, F_1<0, \,\, J>0: \quad c<a_0, \, z_1<c, \, a_1<z_2<a_2;\quad \quad$$
$$ \text{(II)} \quad F_0>0, \, F_1<0, \,\, J>0: \quad a_0<c<0,\, z_1<0, \, a_1<z_2<a_2;$$
$$ \text{(III)} \quad F_0>0, \, F_1<0, \,\, J<0: \quad 0<c<a_1,\, z_1<0, \, a_1<z_2<a_2;$$
$$ \text{(IV)} \quad F_0>0, \, F_1>0, \,\,  J<0: \quad a_1<c<a_2, \, z_1<0, \, c<z_2<a_2.$$
Here we have used the interpretation of $J$ as the derivative of $\Phi_z$ at $z=0$, see (\ref{JF}).

\begin{figure}[h]
\centering
\parbox[c]{0.48\linewidth}
{\includegraphics[width=\linewidth]{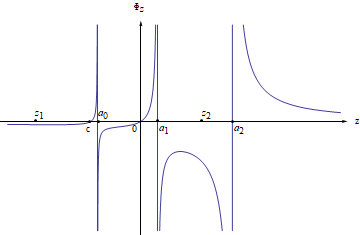}\\ \centering {\footnotesize{$(I)\ F_0<0,F_1<0,J>0$}}}
  \parbox[c]{0.48\linewidth}
{\includegraphics[width=\linewidth]{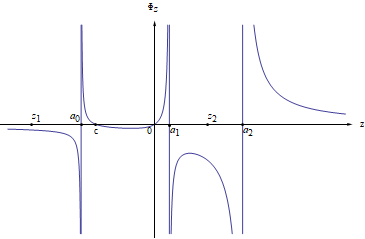}\\ \centering{\footnotesize{$(II)\ F_0>0,F_1<0,J>0$}}}
\parbox[c]{0.48\linewidth}
{\includegraphics[width=\linewidth]{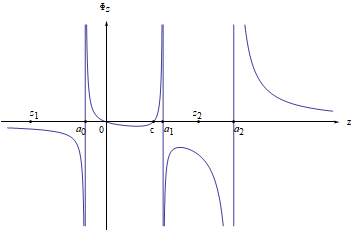}\\ \centering{\footnotesize{$(III)\ F_0>0,F_1<0,J<0$}}}
 \parbox[c]{0.48\linewidth}
{\includegraphics[width=\linewidth]{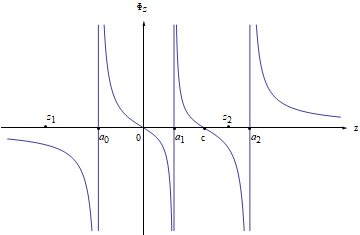}\\ \centering{\footnotesize{$(IV)\ F_0>0,F_1>0,J<0$}}}
\caption{Possible graphs of $\Phi_z$}
\end{figure}

Now we can answer our first question about the geodesic scattering on the hyperboloids.

\begin{Theorem}
A geodesic on one-sheeted hyperboloid $$\frac{x_0^2}{a_0} + \frac{x_1^2}{a_1} +\frac{x_2^2}{a_2} =1$$ with $a_0<0<a_1<a_2$ is crossing the neck ellipse with $x_0=0$  if and only if the integral
\beq{F0}
F_0=y_0^2+\sum_{j=1}^2\frac{(x_0y_j-x_jy_0)^2}{a_0-a_j}>0.
\eeq
When $F_0<0$ the geodesic is reflected back to the infinity. The geodesics with $F_0=0$ are approaching the neck geodesic, spiralling around the hyperboloid.
\end{Theorem}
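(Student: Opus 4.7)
The plan is to combine a conservation-law argument at the neck (for the necessity of $F_0\geq 0$) with Kn\"orrer's reduction to the Neumann system on $S^2$ (for sufficiency and the limiting case). Since $B=A^{-1}$ is diagonal and invertible, the crossing condition $x_0=0$ on the hyperboloid is equivalent to $q_0=0$ for the Kn\"orrer image $q=Bx/|Bx|\in S^2$.

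First I establish necessity. Substituting $x_0=0$ into \eqref{F0} and using the neck-ellipse relation $x_1^2/a_1+x_2^2/a_2=1$ gives
\[
F_0\big|_{x_0=0}=a_0\,y_0^2\left(\frac{x_1^2}{a_1(a_0-a_1)}+\frac{x_2^2}{a_2(a_0-a_2)}\right),
\]
which is strictly positive unless $y_0=0$, since $a_0<0$, $a_j>0$, and $a_0-a_j<0$. Hence $F_0\geq 0$ at every neck point, and conservation of $F_0$ rules out any crossing when $F_0<0$. Such a scattering geodesic must then be reflected back to the same infinity, since it is neither closed (the only closed geodesic being the neck itself) nor able to pass through.

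For sufficiency, the residue formula $F_0=a_0(a_0-c)/[(a_0-a_1)(a_0-a_2)]$ shows $F_0>0\iff c>a_0$, comprising Cases II--IV of Figure 2. Under the Kn\"orrer substitution $u=\varepsilon/z$, $d_1=\varepsilon/c$, $b_k=\varepsilon/a_k$, the polynomial $R_N(u)=-4\,u(u-d_1)\prod_{k=0}^2(u-b_k)$ governing the Dubrovin flow on the Neumann side has, in each of these three cases, a root configuration in which one of the Neumann elliptic coordinates $u_i$ has $b_0=\varepsilon/a_0$ as an accessible turning point. Hence $u_i=b_0$ is attained along the Neumann trajectory, i.e.\ $q_0=0$, which transfers to $x_0=0$ on the hyperboloid. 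In Case I ($F_0<0$), the analogous turning point is $d_1$ rather than $b_0$, reconfirming the necessity direction from the Neumann side.

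The borderline case $F_0=0$ corresponds to $c=a_0$, i.e.\ $d_1=b_0$, so that $b_0$ becomes a double root of $R_N$. The Abel integral $\int du/\sqrt{R_N(u)}$ then diverges logarithmically as $u_i\to b_0$, forcing $u_i(\tau)\to b_0$ only as $\tau\to\pm\infty$ and with exponential rate. Translating back, $q_0(\tau)\to 0$ exponentially and the geodesic spirals around the hyperboloid, asymptotically approaching the neck geodesic. The main obstacle I anticipate is the case-by-case sign analysis of $R_N$: the hyperboloid forces $b_0$ to have the opposite sign from $b_1,b_2$, so Moser's uniform root ordering from the ellipsoid case is lost, and one must verify in each subcase of Cases II--IV that the admissible oscillation interval of the relevant $u_i$ actually terminates at $b_0$ rather than at $0$, $d_1$, or another $b_j$.
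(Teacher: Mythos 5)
Your architecture is sound, and your ``only if'' half is a genuinely different argument from the paper's. The paper derives \emph{both} directions from a single case analysis of the graphs of $\Phi_z$ (cases (I)--(IV) of Section 5): in the Jacobi elliptic coordinates one has $z_1\le a_0$ with equality exactly on the neck, and one reads off that $z_1$ reaches $a_0$ in cases (II)--(IV) but is confined to $z_1\le c<a_0$ in case (I), while the $F_0=0$ analysis is done via the degenerate Dubrovin equation exactly as you do. Your direct evaluation
\[
F_0\big|_{x_0=0}=a_0\,y_0^2\left(\frac{x_1^2}{a_1(a_0-a_1)}+\frac{x_2^2}{a_2(a_0-a_2)}\right)\ \ge\ 0
\]
combined with conservation of $F_0$ is a cleaner, more elementary proof of necessity than the paper's, and your residue identity $F_0=a_0(a_0-c)/[(a_0-a_1)(a_0-a_2)]$, hence $F_0>0\iff c>a_0$, correctly matches cases (II)--(IV). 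Two small points to tighten: (a) when $F_0<0$ you should say explicitly that continuity of $F_0$ also forbids \emph{asymptotic} approach to the neck, and that the geodesic, being trapped in $\{x_0>0\}$ and not closed, must actually escape to infinity (the paper gets this for free from $z_1\le c$, which runs $z_1$ back out to $-\infty$); (b) the $J=0$ generators are outside Kn\"orrer's reduction and should be noted separately, though one checks they have $F_0>0$ and do cross the neck.

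The one substantive incompleteness is exactly the step you flag and defer: for sufficiency you assert, but never verify, that in each of cases (II)--(IV) the relevant Neumann coordinate has $b_0=\varepsilon/a_0$ as an accessible, simple turning point. That verification \emph{is} the ``if'' direction, so it cannot be left as an anticipated obstacle. It is also genuinely easier on the Jacobi side, where it is one line: the elliptic coordinates live where $\Phi_z=-(R_zx,y)^2\le 0$, and for $z<a_0$ the denominator of $\Phi_z=\frac{z(z-c)}{(z-a_0)(z-a_1)(z-a_2)}$ is negative, so the accessible set for $z_1$ is $\{z\le a_0:\ z(z-c)\ge 0\}$; this is all of $(-\infty,a_0]$ when $c>a_0$ (so $a_0$ is a simple branch point reached in finite time, i.e.\ the neck is crossed), and is $(-\infty,c]$ when $c<a_0$. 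Passing to $u=\varepsilon/z$ forces you to split further on the sign of $J$ and to redo the interlacing of $0,d_1,b_0,b_1,b_2$ separately in (II) versus (III)--(IV), which is precisely the complication you anticipate; I recommend either carrying out that sign bookkeeping in full or switching to the $z$-variable for this step. Your treatment of $F_0=0$ (double root at $b_0$, logarithmically divergent Abel integral, exponential approach via Dubrovin) coincides with the paper's and is fine.
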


\begin{proof}
Indeed, the neck ellipse $x_0=0$ in the elliptic coordinates corresponds to $z_1=a_0$. We can see from above analysis that $z_1$ always hits $a_0$ except the case (I) with $F_0<0.$
In the case (I) we have $z_1\leq c<0$, so the geodesic is reflected back to infinity.

In the critical case $F_0=0$ we have $F_1+F_2=|y|^2$ and $a_1^{-1}F_1+a_2^{-1}F_2=0,$ which implies that $F_1<0<F_2.$
In this case the polynomial $$R(z)=-4z(z-a_0)^2(z-a_1)(z-a_2)$$ has a double root and the corresponding spectral curve is singular.

On the Neumann side we have the Dubrovin equations
$$\dot u_1=\frac{\sqrt{R(u_1)}}{u_1-u_2},\quad \dot u_2=\frac{\sqrt{R(u_2)}}{u_2-u_1}$$
with $R(u)=-4u(u-b_0)^2(u-b_1)(u-b_2).$ We see that when $u_1$ is close to $b_0$ the derivative $\dot u_1$ is proportional to $b_0-u_1$, which means that $u_1-b_0 \approx e^{-c_0\tau}.$
Since $z_1=u_1^{-1}$ we see that $z_1-a_0$ also decays exponentially, which implies the claim.
\end{proof}

The degenerate solutions of the Neumann system were studied by Moser in relation with the soliton solutions of the Korteweg--de Vries equation \cite{M3}.
In our case they correspond to the one-solitons at the background of one-gap potentials first discussed by Kuznetsov and Mikhailov \cite{KM} (the general $n$-gap case was studied by Krichever \cite{Krich}).

Similar arguments lead to the general result stated in the Introduction (Theorem 1.1).

Let us now address Question 2, using Kn\"orrer's identification of the Jacobi and Neumann system.

Consider the asymptotic hypersurfaces $C_{\pm}\subset  S^N$ given by $$(Aq,q)=0, |q|^2=1.$$
In the Neumann elliptic coordinates $u_1,\dots, u_n$ which are the solutions of
$$
((u-\varepsilon B)^{-1}q,q)=0,
$$
they are simply the coordinate surfaces $u_n=0$ (we choose now for convenience the ordering $u_1>u_2>\dots >u_n$). It is natural therefore to use the remaining $u_1,\dots, u_{n-1}$
as the coordinates on $C.$ There is a problem though, since knowing the relation (\ref{qii}) defines $q_i$ only up to a sign.

To deal with this issue we consider the corresponding {\it real}
spectral curve $\Gamma$ given by
$$
y^2=R(u):=-4u\prod_{i=1}^{n-1}(u-d_i)\prod_{k=0}^{n}(u-b_k).
$$
It consists of one unbounded component and $n$ ovals $\alpha_1, \dots, \alpha_n$, one of them (which we choose to be $\alpha_n$) containing point $(0,0)$.

Note that when $u=u_j$ then $(R_uq,q)=((u_j-\varepsilon B)^{-1}q,q)=0$, and thus $\Psi^\varepsilon_{u_j}=-(R_{u_j} p,q)^2$. Hence if we know both $p$ and $q$ this allows us to define $\sqrt {R(u_j)}$ uniquely as
$$
\sqrt {R(u_j)}=2(R_{u_j} p,q)\prod_{k=0}^{n}(u_j-b_k)
$$
and as a result well-defined point $P_j=(u_j, \sqrt{R(u_j)}) \in \Gamma$ (see Moser \cite{M2}).
The projection of the dynamics on the $u$ coordinate is described by the Dubrovin equations (\ref{Dub}), which
determine the orientation on the ovals $\alpha_1, \dots, \alpha_n$.

The point $P_j$ is rotating along the corresponding oval $\alpha_j$, and when it passes through the branching points $(b_j,0)$ the corresponding coordinate $q_j$ changes sign.
In this way we control the sign of the coordinates as well.

Let $D=(P_1,\dots, P_{n-1})$ be the corresponding set of points on $\Gamma$ and define a real version of (partial) Abel map, which is a diffeomorphism
\begin{equation}\label{AD1}\mathbb A:\alpha_1\times\dots\times \alpha_{n-1} \to \mathbb T^{n-1}=\mathbb R^{n-1}/\mathcal L
\end{equation}
defined by
\begin{equation}\label{AD2}
{\mathbb A}(D)_j:=\sum_{i=1}^{n-1}\int^{P_i} \omega_j, \quad \omega_j=\frac{u^{j-1}du}{\sqrt{R(u)}}, \quad j=1,\dots, n-1,
\end{equation}
and the lattice $\mathcal L$ is generated by the period vectors $\Omega_1,\dots, \Omega_{n-1}$:
\begin{equation}\label{AD3}
\Omega_j=(\oint_{\alpha_1}\omega_j,\dots, \oint_{\alpha_{n-1}}\omega_j), \quad j=1,\dots, n-1.
\end{equation}

Note that we do not use here the point $P_n$ on the remaining oval $\alpha_n$ and the last holomorphic differential $\omega_n=u^{n-1}du/\sqrt{R(u)}$. This map works well {\it only on the reals}, see the discussion of the complex situation in \cite{V0} in relation with the billiard on the ellipsoid.

Now the we can reformulate the scattering problem as the relation between the divisors $D_-$ and $D_+$ corresponding to the limits of the geodesic as $s\to \pm\infty.$

\begin{Theorem}\label{scattering}
The geodesic scattering on the hyperboloid is described by the relation
\begin{equation}\label{scat}
{\mathbb A}(D_+)={\mathbb A}(D_-)-\Delta, \quad \Delta_j=\oint_{\alpha_n} \omega_j,  \quad j=1,\dots, n-1.
\end{equation}
The topological properties of the geodesic are completely determined by the position of the vector $\Delta$ with respect to the lattice $\mathcal L.$
\end{Theorem}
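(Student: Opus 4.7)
The plan is to push the whole problem onto the spectral curve via Kn\"orrer's correspondence and then read off the scattering as the jump produced by the single ``linearly evolving'' point of the divisor. Concretely, after passing from $x(s)$ to the reparametrised Neumann trajectory $q(\tau)$ by Kn\"orrer's theorem and building the full divisor $D(\tau)=(P_1,\dots,P_n)\in\Gamma^n$ out of the elliptic coordinates, Proposition~\ref{propos} tells us that among the Abelian differentials $\omega_j=u^{j-1}du/\sqrt{R(u)}$ the first $n-1$ give conserved Abel sums
\[
\tilde\xi_j(\tau)\;=\;\sum_{i=1}^n\int^{P_i(\tau)}\omega_j,\qquad j=1,\dots,n-1,
\]
while only $\omega_n$ winds up linearly in $\tau$. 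Identifying the asymptotic hypersurfaces $C_\pm$ with the level $u_n=0$ (since $(Aq,q)=0$ is exactly the condition that $u=0$ be a root of the spherical elliptic-coordinate equation), a single scattering arc is the piece of $q(\tau)$ between two consecutive visits of $P_n$ to the branch point $(0,0)$ on the distinguished oval $\alpha_n$.

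The relation (\ref{scat}) now drops out of the conservation of $\tilde\xi_j$: along a scattering arc
\[
\sum_{i=1}^{n-1}\int_{P_i^-}^{P_i^+}\omega_j \;+\; \int_{P_n^-}^{P_n^+}\omega_j \;=\;0,\qquad j=1,\dots,n-1,
\]
and the Dubrovin equations (\ref{Dub}) fix the orientation so that between two successive zeros of $u_n$ the point $P_n$ traverses the oval $\alpha_n$ exactly once (on a real hyperelliptic oval $u$ oscillates between its two bounding branch points, one of which is $(0,0)$, and one such period corresponds to one full loop around the oval). The second integral is therefore the complete period $\oint_{\alpha_n}\omega_j=\Delta_j$, and projecting the equality to $\mathbb T^{n-1}=\mathbb R^{n-1}/\mathcal L$ yields (\ref{scat}).

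The main obstacle I anticipate is checking rigorously that $P_n$ really performs one complete circuit of $\alpha_n$ between consecutive endpoints, and that the companion points $P_1,\dots,P_{n-1}$ stay on the ovals where $\mathbb A$ is a diffeomorphism; both points require a branch-point analysis of $R(u)$ analogous to the four-case picture for $n=2$ in Figure~2, combined with the sign-tracking prescription $\sqrt{R(u_j)}=2(R_{u_j}p,q)\prod_k(u_j-b_k)$ recalled just before the statement. The topological claim then follows by unpacking (\ref{scat}): writing $\Delta=\sum_{i=1}^{n-1}m_i\Omega_i+\delta$ with integers $m_i$ and $\delta$ in a fundamental domain, the integers $(m_1,\dots,m_{n-1})$ count how many complete laps each $P_i$ executes on $\alpha_i$ during the scattering, i.e.\ exactly the winding data of the geodesic around the corresponding confocal caustics (around the neck of the hyperboloid for $n=2$), while $\delta$ records the residual angular shift; every topological invariant of the scattered geodesic is thus encoded in the position of $\Delta$ with respect to $\mathcal L$.
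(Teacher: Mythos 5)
Your proposal is correct and follows essentially the same route as the paper: both use the conservation of the $n-1$ Abel sums built from $\omega_1,\dots,\omega_{n-1}$ (Proposition \ref{propos}), isolate the contribution of the point $P_n$, and identify it with the full period $\oint_{\alpha_n}\omega_j=\Delta_j$ because $P_n$ traverses the oval $\alpha_n$ exactly once between consecutive contacts with the asymptotic hypersurfaces. Your added justification of the one-full-circuit claim via the oscillation of $u_n$ between the bounding branch points of $\alpha_n$, and your unpacking of the topological statement by writing $\Delta=\sum m_i\Omega_i+\delta$, only make explicit what the paper asserts more briefly.
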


\begin{proof}
From the Proposition (\ref{propos}) we see that along the trajectory $\xi_2,\dots, \xi_n$ are preserved, which means that
\beq{xi0}
\sum_{j=1}^n \int^{P_j} \frac{z^{n-k}}{\sqrt{R(z)}}dz=\xi^0_k, \quad k=2,\dots n.
\eeq
This means that
$$
{\mathbb A}(D_+)+{\mathbb A}(P_n^+)={\mathbb A}(D_-)+{\mathbb A}(P_n^-),
$$
where
$$
{\mathbb A}(P)_j=\int^P \omega_j, \quad j=1,\dots, n-1.
$$
As we know the geodesic corresponds to the part of the Neumann trajectory described by $P_n$ travelling exactly once along the oriented oval $\alpha_n$, the difference
$$
{\mathbb A}(P_n^+)-{\mathbb A}(P_n^-)=\Delta.
$$
This proves the relation (\ref{scat}), which determines uniquely the map $D_-\to D_+$ and hence by projection to $u$ coordinates the change of the coordinates $u_1,\dots, u_{n-1}$ on $C.$ The shift vector $\Delta$ contains all the information we need to recover the signs of the corresponding coordinates $q_0,\dots, q_n$ and the topological properties of the trajectory.
\end{proof}

Let us demonstrate this in the $n=2$ case assuming for simplicity the reflection case (I). The position of roots and ovals in this case is shown on Fig. 3.

\begin{figure}[h]
  \includegraphics[width=90mm]{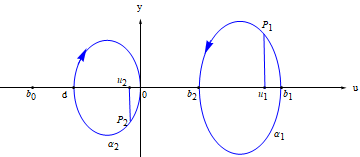}
  \caption{The ovals and position of roots in the reflection case}
\end{figure}
In that case the asymptotic curve $C$ can be identified with the double cover of the oval $\alpha_1$: when $P_1$ goes once around the oval, it passes through once through branching points $(b_1,0)$ and $(b_2,0)$, which changes the sign of the corresponding coordinates $q_1$ and $q_2$ respectively.

In particular, we see that the corresponding geodesic makes
\beq{N}
N=\left[ \frac{I_2}{2I_1} \right],
\eeq
full rotations about the hyperboloid before it is reflected back to infinity, where
$$
I_1=\oint_d^0\frac{du}{\sqrt{R(u)}}, \quad I_2=\oint_{b_1}^{b_2}\frac{du}{\sqrt{R(u)}}.
$$
The analysis in the general case is similar.

\section{Two-sheeted hyperboloid and cone cases}

Let us briefly discuss what is happening on the two-sheeted hyperboloid
$
Q(x)=(A^{-1}x,x)=(Bx,x)=-1
$
and cone $Q(x)=0$ with the same assumption
$a_0<0<a_1<\dots<a_n$ (see two-dimensional case on Fig.4).

\begin{figure}[h]
  \includegraphics[width=35mm]{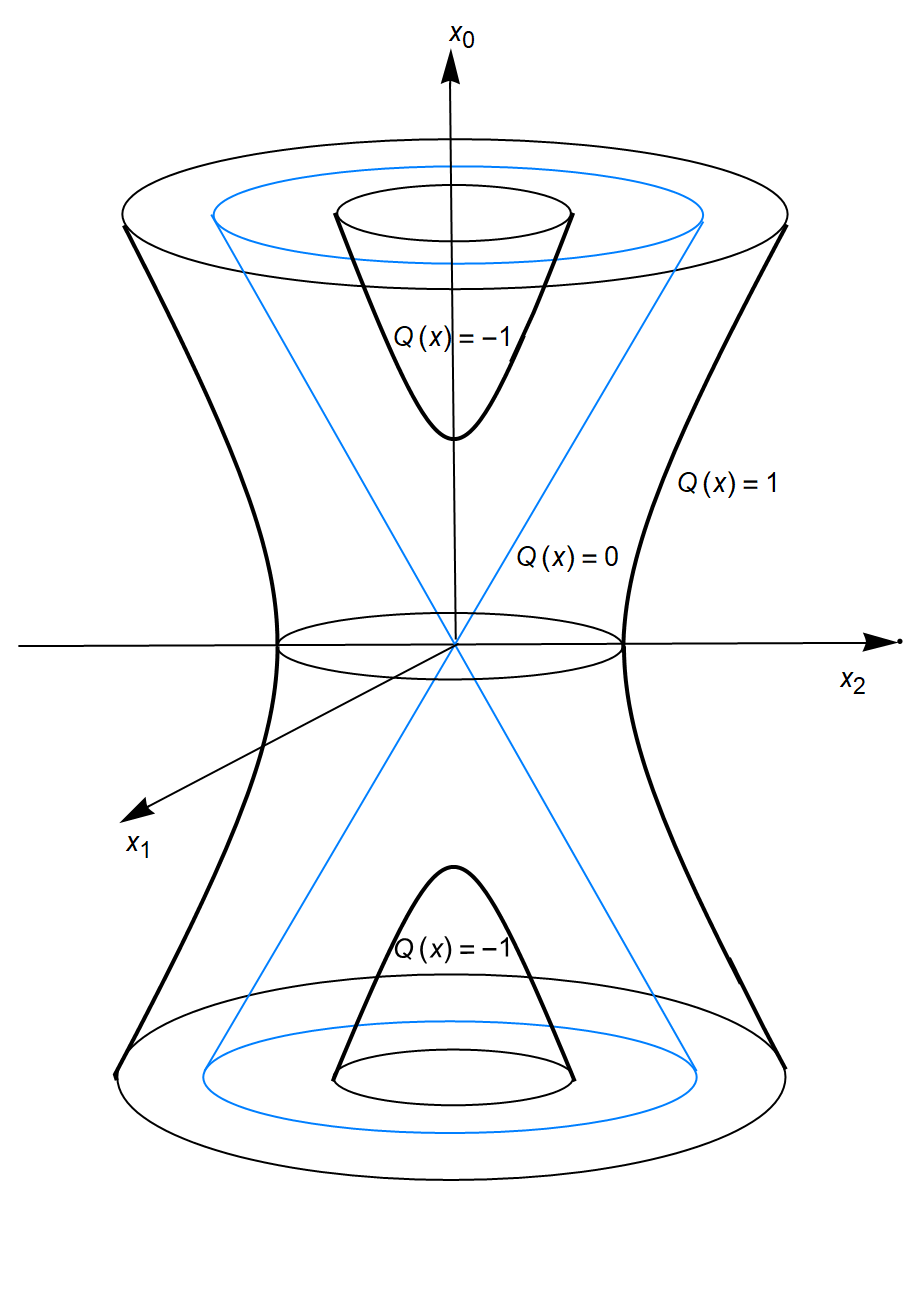}
  \caption{Hyperboloids and cone in two dimensions}
\end{figure}

The geodesics in the natural parameter $s$ are given by the same equation
$$
x^{''}=-\lambda Bx, \,\,\,\, \lambda=\frac{(Bx',x')}{(Bx,Bx)}=\frac{ J}{|Bx|^4},
$$
but now in two-sheeted hyperboloid case $J=(Bx,Bx)(Bx',x')$ is always positive.

Consider the following version of (\ref{eq phi})
$$
\Phi_z(x,y)=((R_zx,x)-1)(R_zy,y)-(R_z x,y)^2, \quad R_z=(zI- A)^{-1},
$$
which can be written as
$$
\Phi_z(x,y)=\sum_{k=0}^n \frac{F_k(x,y)}{z- a_k}=-\frac{z\prod_{i=1}^{n-1}(z-c_i)}{\prod_{k=0}^{n}(z- a_k)},
$$
$$
F_k(x,y)=-y_k^2+\sum_{j\neq k}\frac{(x_ky_j-x_jy_k)^2}{a_k-a_j}, \quad k=0, 1,\dots, n.
$$
Note that $F_0(x,y)$ is always negative, meaning that all geodesics are reflected back, which is, of course, obvious by geometric reasons.
 It is easy to show also that
$$
\sum_{k=0}^n a_k^{-1}F_k=0, \quad \sum_{k=0}^n F_k=- |y|^2,
$$
$$
  J=\frac{d}{dz}\Phi_z(x,y)|_{z=0}=-\frac{\prod_{i=1}^{n-1}c_i}{\prod_{k=0}^{n} a_k}=-\sum_{k=0}^n a_k^{-2}F_k.
$$
The elliptic coordinates $z_1,\dots, z_n$ defined as the non-zero roots of the equation
$$
((zI-A)^{-1}x,x)=1
$$
and after some reparametrisation the dynamics becomes linear at the Jacobi variety of the corresponding hyperelliptic curve
$$
w^2+R(z)=0, \quad R(z)=z\prod_{i=1}^{n-1}(z-c_i)\prod_{k=0}^{n}(z-a_k).
$$

Since $J>0$ in this case we can change the length parameter $s$ to $\tau$ like in ellipsoid case
$$
\frac{d\tau}{ds}=\alpha(s), \quad
\alpha^2=\lambda=\frac{(Bx',x')}{|Bx|^2}=\frac{J}{|Bx|^4}.
$$

From Kn\"orrer \cite{Kn} we have that for any reparametrised geodesic $x(\tau)$ the normal vector $q=\frac{Bx}{|Bx|}$ satisfies the equations of Neumann problem on unit sphere $S^n$
$$\ddot{q}=- Bq +\mu q.$$
The corresponding trajectories $q(\tau)$ satisfy the relation $\Psi_0(p,q)=0,$ where
$\Psi_u(p,q)$ is the generating function of the Neumann integrals given by (\ref{eq psi}).

Let $d_1,\dots, d_{n-1}$ be the non-zero roots of $\Psi_u(p,q)=0$:
$$
\Psi_u(p,q)=\frac{u\prod_{i=1}^{n-1}(u-d_i)}{\prod_{k=0}^{n}(u-b_k)}.
$$
Then the corresponding spectral curves of Neumann and Jacobi systems
$$
y^2=\Psi_u(p,q), \quad w^2=\Phi_z(x,y),
$$
are related by the change
$u=z^{-1}.$ Hence $b_k=a_k^{-1}, \, d_i=c_i^{-1}$ and $u_i=z_i^{-1}$, where $z_1,\dots, z_n$ are the corresponding elliptic coordinates on hyperboloid.

Let us discuss now the two-dimensional case. When $n=2$ we have
$$
\Phi_z(x,y)=\frac{F_0(x,y)}{z-a_0}+\frac{F_1(x,y)}{z- a_1}+\frac{F_2(x,y)}{z- a_2}=\frac{-z(z-c)}{(z- a_0)(z- a_1)(z- a_2)},
$$
$$
F_k(x,y)=-y_k^2+\sum_{j\neq k}\frac{(x_ky_j-x_jy_k)^2}{a_k-a_j}, \quad k=0, 1, 2,
$$
$$
\sum_{k=0}^2 a_k^{-1}F_k=0, \quad \sum_{k=0}^2 F_k=-1,\quad c=-a_0a_1a_2 J>0.$$
As we have already mentioned, we have $F_0<0$ which means that all the geodesics are reflected back.

Depending on the signs of the integrals $F_1, F_2$ we have the following two possibilities for positions of $c$ and the elliptic coordinates $z_1<z_2$:
$$ \quad\quad (I)\,\, F_1<0, \,\, F_2<0: \quad a_1<c<a_2, \quad a_1<z_1<c,\quad a_2<z_2,$$
$$ (II) \,\,  F_1<0, \,\, F_2>0: \quad a_2<c, \quad  a_1<z_1<a_2,\quad c<z_2,$$
or, in terms of Neumann parameters:
$$ (I)\,\,b_0<0<u_2<b_2<d<u_1<b_1;$$
$$ (II)\,\,b_0<0<u_2<d<b_2<u_1<b_1.$$

In particular, in case (I) we see that the corresponding geodesic makes
$$
N=\left[ \frac{I_2}{2I_1} \right],
$$
full rotations about the hyperboloid before it is reflected back to infinity, where
$$
I_1=\oint_{0}^{b_2}\frac{du}{\sqrt{R(u)}}, \quad I_2=\oint_{b_1}^{d}\frac{du}{\sqrt{R(u)}}, \quad R(u)=-4u(u-d)(u-b_0)(u-b_1)(u-b_2).
$$

In the case of cone given by
$$Q(x)=\frac{x_0^2}{a_0}+\frac{x_1^2}{a_1}+\frac{x_2^2}{a_2}=0$$
the metric becomes flat. Flattening cone to the plane we have a corner with angle $\alpha$ given by the length of the intersection of the cone with the unit sphere $|x|^2=1.$

To find $\alpha$ differentiate the corresponding equations  $$\frac{x_0^2}{a_0}+\frac{x_1^2}{a_1}+\frac{x_2^2}{a_2}=0,\quad  x_0^2+x_1^2+x_2^2=1,$$
to get
$$\frac{x_0dx_0}{a_0}+\frac{x_1dx_1}{a_1}+\frac{x_2dx_2}{a_2}=0, \quad x_0dx_0+x_1dx_1+x_2dx_2=0,$$
which implies that
$$ds=\sqrt{dx_0^2+dx_1^2+dx_2^2}=\sqrt{\frac{1-k_1k_2x_2^2}{(1-k_1x_2^2)(1-k_2x_2^2)}}dx_2$$
with
$$0<k_1=\frac{a_2-a_1}{a_2}<1, \quad k_2=\frac{a_2-a_0}{a_2}>1. $$
Introducing $t=\sqrt{k_2}x_2$ we have the following complete elliptic integral
\beq{alpha}
\alpha=\frac{4}{\sqrt{k_2}} \int_0^{1} \sqrt{\frac{1-k_1t^2}{(1-t^2)(1-\frac{k_1}{k_2}t^2)}}dt.
\eeq
In particular, in symmetric case $a_1=a_2$ we have $k_1=0$ and
$$\alpha=\frac{4}{\sqrt{k_2}}  \int_0^{1} \frac{dt}{\sqrt{1-t^2}}=2\pi \sqrt{\frac{a_2}{a_2-a_0}}.$$
The geodesics will become the billiard trajectories in the corresponding corner, see Fig. 5.

\begin{figure}[h]
  \includegraphics[width=70mm]{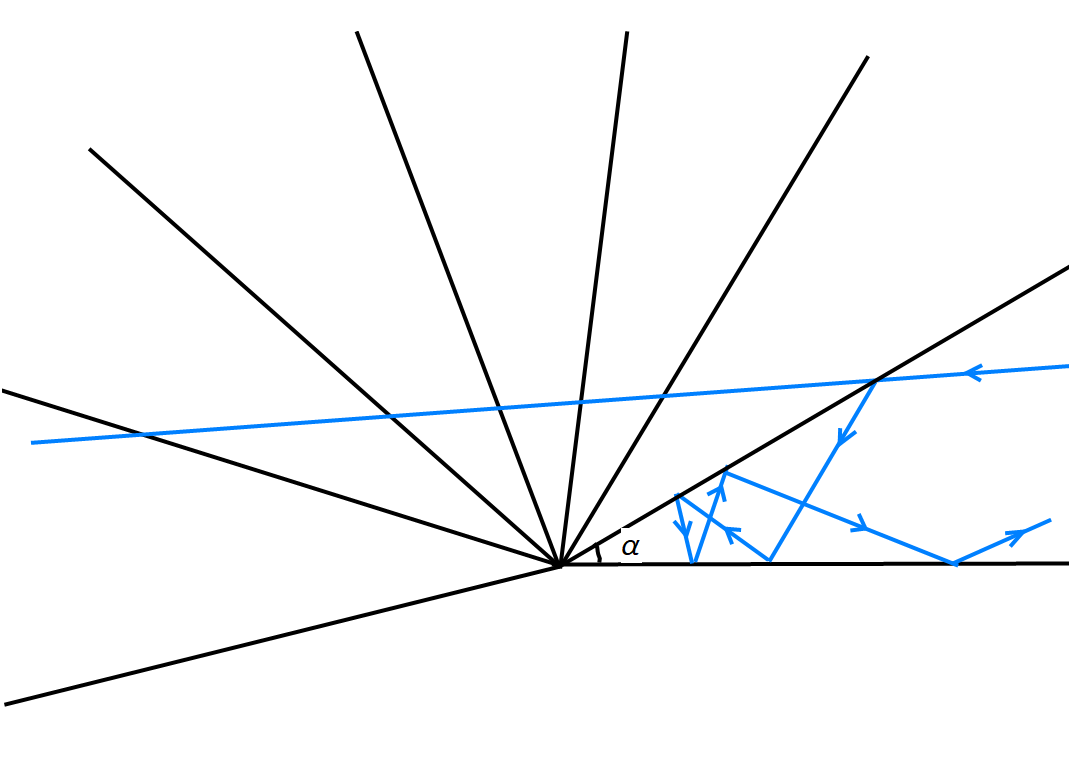}
  \caption{Geodesic scattering on cone and billiard in the corner}
\end{figure}

As one can see from this figure, the corresponding scattering is the shift by $\pi$ on the circle $\mathbb R/\alpha \mathbb Z$ with $\alpha$ given by (\ref{alpha}).
One can check that this is a limiting case of the general formula (\ref{scat}) above.

\section{Symmetric two-dimensional case}

Consider now the case of the rotationally-invariant one-sheeted hyperboloid in $\mathbb{R}^{3}$
\begin{equation}\label{eq 6.1}
\frac{x^2+y^2}{a^2}-\frac{z^2}{c^2}=1.
\end{equation}
Choose the following parametrization
\begin{equation}\label{eq 6.2}
\begin{array}{rcl}
\left\{
\begin{array}{lcl}
x=a\sqrt{1+\rho^2}\cos\phi,\\
y=a\sqrt{1+\rho^2}\sin\phi,\\
z=c\rho,
\end{array}
\right.
\end{array}
\end{equation}
with $\rho\in \mathbb R,\, \phi\in\mathbb R \,(\text{mod} \, 2\pi).$

The metric on the hyperboloid in these coordinates has the form
$$
ds^2=\frac{c^2+b^2\rho^2}{1+\rho^2} d\rho^2+a^2(1+\rho^2)d\phi^2, \quad b^2:=a^2+c^2,
$$
so the corresponding Hamiltonian is
\beq{HS}
H=\frac{1}{2}\left [\frac{c^2+b^2\rho^2}{1+\rho^2}\dot \rho^2+a^2(1+\rho^2)\dot \phi^2\right]=\frac{1}{2}\left [\frac{1+\rho^2}{c^2+b^2\rho^2}p_\rho^2+\frac{p_\phi^2}{a^2(1+\rho^2)}\right].
\eeq

Since $H$ does not depend on $\phi$
we have the integral
\begin{equation}\label{I}
I=p_\phi=\frac{\partial H}{\partial \dot \phi}=a^2(1+\rho^2) \dot \phi, \quad \frac{dI}{dt}=0.
\end{equation}

Let $\alpha$ be the angle of the geodesic with the parallels defined by $\rho=const$, then an easy calculation shows that
$$
\cos \alpha=\frac{a\sqrt{1+\rho^2}\dot\phi}{\sqrt{2H}}.
$$
The classical Clairaut integral for the geodesics on the surfaces of revolution, which is defined in our case as
$$
C:=a\sqrt{1+\rho^2}\cos \alpha,
$$
can be expressed then in terms of $I$ and $H$ as
$$
C=a\sqrt{1+\rho^2}\cos \alpha=\frac{a^2(1+\rho^2)\dot\phi}{\sqrt{2H}}=\frac{I}{\sqrt{2H}}.
$$
For given value of $C$ we have the inequality
$$\frac{C^2}{a^2(1+\rho^2)}=\cos \alpha^2 \leq 1,$$
or, equivalently
$$
\rho^2\geq \frac{C^2}{a^2}-1=\frac{I^2}{2a^2H}-1.
$$
In particular, in the length parametrisation $t=s$ we have $2H=1$ and
$C^2=I^2.$

This implies the following

\begin{prop}
A geodesic is reflected if and only if the value of the Clairaut integral $C^2>a^2.$
The geodesics with $C^2<a^2$ cross the neck and then go to the opposite infinity.
At the critical level $C^2=a^2$ we have
the neck periodic geodesic and the geodesics which are exponentially approaching it rotating around the hyperboloid.

The Joachimsthal integral $J$ and the corresponding integral (\ref{F0}) have the form
\beq{JHCF}
J=\frac{1}{a^4c^2}\left(\frac{b^2}{a^2}I^2-2a^2H\right), \quad
F_0=\frac{2c^2H}{a^2b^2}(a^2-C^2).
\eeq
\end{prop}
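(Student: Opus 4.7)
The plan is to address the three claims in turn: derive the reflection-versus-crossing dichotomy from the one-variable equation of motion for $\rho$, analyze the critical level $C^2=a^2$ by linearization, and obtain the expressions for $J$ and $F_0$ by direct substitution.

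For the first claim, working in the length parametrization ($2H=1$, so $C=I$) and eliminating $\dot\phi$ via $I=a^2(1+\rho^2)\dot\phi$ in the Hamiltonian \eqref{HS} yields
\begin{equation*}
(c^2+b^2\rho^2)\,\dot\rho^2=a^2\rho^2-(C^2-a^2).
\end{equation*}
When $C^2>a^2$ the right-hand side vanishes at the strictly positive turning value $\rho^2=(C^2-a^2)/a^2$, so the geodesic is reflected without reaching the neck $\rho=0$. When $C^2<a^2$ the right-hand side is strictly positive for every real $\rho$, so $\dot\rho$ never changes sign; combined with $|\dot\rho|\to a/b\ne 0$ as $|\rho|\to\infty$, this forces $\rho(s)$ to sweep monotonically through $\mathbb R$, taking the geodesic from one end of the hyperboloid to the other and crossing the neck exactly once.

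At the critical level $C^2=a^2$ the same equation reduces to $\dot\rho^2=\rho^2/(c^2+b^2\rho^2)$. The constant solution $\rho\equiv 0$ is the neck closed geodesic; any other solution satisfies $\dot\rho=\pm\rho/\sqrt{c^2+b^2\rho^2}$, so near $\rho=0$ we have $\dot\rho\approx\pm\rho/c$ and hence $|\rho(s)|\sim K\,e^{\mp s/c}$ as the neck is approached. Since $\dot\phi=C/(a^2(1+\rho^2))\to\pm 1/a$ along such a trajectory, it wraps infinitely many times around the axis while spiralling onto the neck, as claimed.

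Finally, the formulas follow by direct computation. With $B=A^{-1}=\mathrm{diag}(-1/c^2,1/a^2,1/a^2)$ one finds $|Bx|^2=(c^2+b^2\rho^2)/(a^2c^2)$, and after the rotational cross-terms collapse,
\begin{equation*}
(Bx',x')=(1+\rho^2)\dot\phi^2-\frac{\dot\rho^2}{1+\rho^2}.
\end{equation*}
Expressing $\dot\rho^2$ and $\dot\phi^2$ through $2H$ and $I$ and using $b^2-a^2=c^2$ to clear the $\rho$-dependence gives $(Bx',x')=(b^2I^2-2a^4H)/\bigl(a^4(c^2+b^2\rho^2)\bigr)$, whence $J=|Bx|^2(Bx',x')=(b^2I^2/a^2-2a^2H)/(a^4c^2)$. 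For $F_0$ with $(a_0,a_1,a_2)=(-c^2,a^2,a^2)$, rotational invariance yields the key identity
\begin{equation*}
(z\dot x-x\dot z)^2+(z\dot y-y\dot z)^2=\frac{a^2c^2\dot\rho^2}{1+\rho^2}+a^2c^2\rho^2(1+\rho^2)\dot\phi^2,
\end{equation*}
and the same elimination together with $C^2=I^2/(2H)$ reduces $F_0$ to $2c^2H(a^2-C^2)/(a^2b^2)$. The only place that needs real care is the rotational identity just displayed; all remaining steps are algebraic simplifications using $b^2-a^2=c^2$.
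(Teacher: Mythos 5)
Your proposal is correct and follows essentially the same route as the paper: reduce to the radial equation $\dot\rho^2=\bigl(\rho^2+(a^2-C^2)/a^2\bigr)/(c^2+b^2\rho^2)$ for the reflection/transmission dichotomy, linearize near $\rho=0$ at the critical level to get the exponential approach to the neck, and verify the formulas for $J$ and $F_0$ by direct substitution (your rotational identity and the simplification via $b^2-a^2=c^2$ check out). The only blemish is a harmless factor of $a^2$ dropped on the left of your first displayed equation (it should read $a^2(c^2+b^2\rho^2)\dot\rho^2=a^2\rho^2-(C^2-a^2)$, and correspondingly $|\dot\rho|\to 1/b$ at infinity), which affects none of the turning-point or sign conclusions.
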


\begin{proof}
Indeed, if $C^2>a^2$ then the geodesic is reflected since $\rho^2\geq \frac{C^2}{a^2}-1>0.$
If $C^2<a^2$ then $\rho$ can take any value, including 0, so the geodesic is crossing the neck circle.

A direct calculation shows that the integrals $J$ and $F_0$ given by (\ref{F0}) with $a_1=a_2=a^2, \, a_0=-c^2$ have the form (\ref{JHCF}),
which, in particular, shows the agreement with the general result from the previous section.

At the critical level $C^2=a^2$ we have
$$
2a^2H-I^2=\frac{a^2(1+\rho^2)}{c^2+b^2\rho^2}p_\rho^2-\frac{\rho^2}{1+\rho^2}p_\phi^2=\frac{a^2(c^2+b^2\rho^2)}{1+\rho^2}\dot\rho^2-\frac{I^2 \rho^2}{1+\rho^2}=0.
$$
The solution $\rho\equiv 0$ corresponds to the periodic neck geodesic. If $\rho \neq 0$ then in the natural parameter $s$ we have $I^2=C^2=a^2$ and
$$
\int\frac{\pm\sqrt{c^2+b^2\rho^2}}{\rho}d\rho=\int ds=s-s_0.
$$
In particular, when $\rho$ is close to zero, then we have the asymptotic behaviour
$$
c \int \frac{d\rho}{\rho}=c \log \rho=-(s-s_0),
$$
which implies the exponential decay
$$
\rho=c_0 e^{-c_1s}, \, c_1= c^{-1}.
$$
The corresponding angular coordinate $\phi$ is defined by
$$
a^2(1+\rho^2) \dot \phi=I=a^2,
$$
which as $\rho \to 0$ asymptotically gives the uniform rotation $\phi=s + const.$
\end{proof}

One can check that the case $J=0$ indeed corresponds to the straight line geodesics.

The scattering and the topological properties of the geodesics are described by the following

\begin{prop}
The total scattering change $\Delta \phi=\phi_\infty - \phi_{-\infty}$ of the angle $\phi$ along a geodesic can be given as
\beq{Delta1}
\Delta \phi=\frac{I}{a}\int_{-\infty}^\infty \sqrt{ \frac{c^2+b^2\rho^2}{(a^2-I^2)+a^2\rho^2}}\frac{d\rho}{1+\rho^2}=\frac{I}{a}\int_{-\pi/2}^{\pi/2} \sqrt{ \frac{c^2+b^2\tan^2 u}{(a^2-I^2)+a^2\tan^2 u}}du
\eeq
when $I^2<a^2$ (transmission case), and
\beq{Delta2}
\Delta \phi=\frac{2I}{a}\int_{\rho_*}^\infty \sqrt{ \frac{c^2+b^2\rho^2}{(a^2-I^2)+a^2\rho^2}}\frac{d\rho}{1+\rho^2}=\frac{2I}{a}\int_{u_*}^{\pi/2} \sqrt{ \frac{c^2+b^2\tan^2 u}{(a^2-I^2)+a^2\tan^2 u}}du,
\eeq
where $\rho_*^2=\tan^2 u_*:=\frac{I^2-a^2}{a^2}$ with $I^2>a^2$ (reflection case).
\end{prop}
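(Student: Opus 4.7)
The statement is a direct consequence of the two conservation laws $H=1/2$ (natural parameter, $2H=1$) and $I=p_\phi$ established in the preceding proposition, so the plan is essentially a change of variables from arclength $s$ to the coordinate $\rho$, followed by the substitution $\rho=\tan u$.

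First, I would write $\dot\phi=\partial H/\partial p_\phi=I/[a^2(1+\rho^2)]$, so that
\begin{equation*}
\Delta\phi=\int\dot\phi\,ds=\int\frac{I}{a^2(1+\rho^2)}\,\frac{d\rho}{\dot\rho}.
\end{equation*}
To evaluate $\dot\rho$, I would use energy conservation in the form $2H=1$ together with $p_\phi=I$: substituting into \eqref{HS} and solving for $p_\rho^2$ yields
\begin{equation*}
p_\rho^2=\frac{(c^2+b^2\rho^2)\,[(a^2-I^2)+a^2\rho^2]}{a^2(1+\rho^2)^2},
\end{equation*}
and then $\dot\rho=\partial H/\partial p_\rho=(1+\rho^2)p_\rho/(c^2+b^2\rho^2)$ gives
\begin{equation*}
\dot\rho^2=\frac{(a^2-I^2)+a^2\rho^2}{a^2(c^2+b^2\rho^2)}.
\end{equation*}
Plugging this into the integral and taking a single square root (with the appropriate sign depending on the branch of motion) produces the integrand $\frac{I}{a}\sqrt{(c^2+b^2\rho^2)/((a^2-I^2)+a^2\rho^2)}\,d\rho/(1+\rho^2)$ that appears in \eqref{Delta1}--\eqref{Delta2}.

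Next I would treat the two regimes separately, using the analysis of the accessible region $\rho^2\geq (I^2-a^2)/a^2$ from the previous proposition. In the transmission case $I^2<a^2$ the radicand is positive for all $\rho\in\mathbb R$, and the geodesic is monotone in $\rho$, so $\rho$ sweeps once from $-\infty$ to $+\infty$; this gives the single integral from $-\infty$ to $\infty$ in \eqref{Delta1}. In the reflection case $I^2>a^2$ the turning point is $\rho_*^2=(I^2-a^2)/a^2$, and the geodesic comes in from $+\infty$, reaches $\rho_*$ where $\dot\rho$ vanishes, and returns to $+\infty$; by symmetry of the integrand under $\rho\mapsto-\rho$ composed with the flip of orientation, the two halves contribute equally, producing the factor $2$ and the integral from $\rho_*$ to $\infty$ in \eqref{Delta2}. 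Finally, the substitution $\rho=\tan u$, for which $d\rho/(1+\rho^2)=du$ and $\rho_*=\tan u_*$, converts each expression into its trigonometric counterpart.

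The computation is routine; the only point that requires real care is the sign bookkeeping at the turning point in the reflection case, together with checking that the factor $2$ in \eqref{Delta2} is correct. I would verify this by noting that on the incoming branch $\dot\rho<0$ and on the outgoing branch $\dot\rho>0$, so that $\int\dot\phi\,ds=\int_{+\infty}^{\rho_*}\dot\phi/\dot\rho\,d\rho+\int_{\rho_*}^{+\infty}\dot\phi/\dot\rho\,d\rho$ with the sign of $\sqrt{\dot\rho^2}$ chosen oppositely on the two pieces; since $I$ and $1+\rho^2$ are unchanged by reversal of $\rho$-motion, both contributions are positive and equal, giving the stated factor.
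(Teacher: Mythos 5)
Your proposal is correct and follows essentially the same route as the paper: both derive $\dot\rho^2=\frac{(a^2-I^2)+a^2\rho^2}{a^2(c^2+b^2\rho^2)}$ from $2H=1$ and $p_\phi=I$, write $\Delta\phi=\int\dot\phi\,d\rho/\dot\rho$, split into the transmission and reflection cases, and finish with $\rho=\tan u$. The extra sign bookkeeping at the turning point is a welcome but minor elaboration of what the paper leaves implicit.
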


\begin{proof}
The dependence of $\phi$ on the length parameter $s$ is defined by (\ref{I}):
$$
\frac{d\phi}{ds}=\frac{I}{a^2}\frac{1}{1+\rho^2}.
$$
From (\ref{HS}) we have
$$
\left(\frac{d\rho}{ds}\right)^2=\frac{(2a^2H-I^2)+2a^2H\rho^2}{a^2(c^2+b^2\rho^2)}=\frac{(a^2-I^2)+a^2\rho^2}{a^2(c^2+b^2\rho^2)}
$$
where we have used that in the length parametrisation $2H=1.$
Thus in the transmission case with $a^2>I^2$ the total scattering change of the angle is determined by the integral
$$
\Delta \phi =\int d\phi=\frac{I}{a^2}\int \frac{ds}{1+\rho^2}
=\frac{I}{a}\int_{-\infty}^\infty\sqrt{\frac{c^2+b^2\rho^2}{(a^2-I^2)+a^2\rho^2}}\frac{d\rho}{1+\rho^2}.
$$
In the reflection case we know that $\rho\geq \rho_*=\sqrt{\frac{I^2-a^2}{a^2}}$ so
$$
\Delta \phi
=\frac{2I}{a}\int_{\rho_*}^\infty\sqrt{\frac{c^2+b^2\rho^2}{(a^2-I^2)+a^2\rho^2}}\frac{d\rho}{1+\rho^2}.
$$
Making the change of variable $\rho=\tan u,$ we come to (\ref{Delta1}), (\ref{Delta2}).
\end{proof}

\section{Quantum case}

Let us briefly discuss the quantum situation in the same simplest case. The corresponding quantum Hamiltonian is the Laplace-Beltrami operator on the hyperboloid (\ref{eq 6.1}) having in the same coordinates the form
$$
\hat H=-\frac{1}{2}\left[\frac{1+\rho^2}{c^2+b^2\rho^2}\frac{\partial^2}{\partial \rho ^2}+\frac{1}{a^2(1+\rho^2)}\frac{\partial^2}{\partial \phi ^2}\right].
$$
Separation of variables $\psi(\rho,\phi)=\psi_1(\rho)\psi_2(\phi)$ in the corresponding stationary Schr\"odinger equation $\hat H\psi=E\psi$ gives
$$
\psi_2''(\phi)=-k^2\psi_2(\phi), \,\, \psi_2=A_k\cos k\phi+B_k \sin k\phi, \quad k \in \mathbb Z_{\geq 0}
$$
and
\beq{Lk}
\mathcal L_k\psi_1(\rho)=E\psi_1(\rho), \quad \mathcal L_k=\frac{1}{2}\left[-\frac{1+\rho^2}{c^2+b^2\rho^2}\frac{d^2}{d\rho^2}+\frac{k^2}{a^2(1+\rho^2)}\right].
\eeq

\begin{prop}
Under the Liouville transformation
\beq{Lt}
\frac{d\tau}{d\rho}=\alpha, \ \ \psi_1(\rho)=\alpha^{-\frac{1}{2}}\varphi(\tau), \ \alpha^2=\frac{2(c^2+b^2\rho^2)}{1+\rho^2},
\eeq
the equation \eqref{Lk} becomes the Schr\"odinger equation
\beq{sh}
\left(-\frac{d^2}{d\tau^2}+V(\tau)\right)\varphi=E\varphi,
\eeq
where
\beq{pot}
V(\tau)=\frac{k^2}{2a^2(1+\rho^2)}-\frac{a^2[6b^2\rho^4+(3b^2+c^2)\rho^2-2c^2]}{8(c^2+b^2\rho^2)^3(1+\rho^2)}
\eeq
with $$\tau=\int \alpha d\rho=\int \sqrt{\frac{2(c^2+b^2\rho^2)}{1+\rho^2}}d\rho.$$
\end{prop}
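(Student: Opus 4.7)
The plan is to apply the standard Liouville transformation. Its one nontrivial ingredient is the cancellation of the $\dot\varphi$-terms that forces the choice of amplitude $\alpha^{-1/2}$; everything else is routine bookkeeping. I would therefore verify (a) that this cancellation occurs and (b) that the remaining curvature-type terms assemble into the stated expression.

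First I would rewrite the eigenvalue equation $\mathcal L_k\psi_1=E\psi_1$, using the identity $(1+\rho^2)/(c^2+b^2\rho^2)=2/\alpha^2$ coming from (\ref{Lt}), as
$$-\alpha^{-2}\psi_1''(\rho)+\frac{k^2}{2a^2(1+\rho^2)}\,\psi_1=E\psi_1,$$
so the coefficient of $\psi_1''$ is expressed purely through $\alpha$. Substituting $\psi_1=\alpha^{-1/2}\varphi(\tau)$ with $d\tau/d\rho=\alpha$ and using $d\varphi/d\rho=\alpha\dot\varphi$, a direct computation gives
$$\psi_1'=-\tfrac{1}{2}\alpha^{-3/2}\alpha'\,\varphi+\alpha^{1/2}\dot\varphi.$$
A second differentiation produces two contributions linear in $\dot\varphi$, namely $-\tfrac{1}{2}\alpha^{-1/2}\alpha'\dot\varphi$ and $+\tfrac{1}{2}\alpha^{-1/2}\alpha'\dot\varphi$, which cancel exactly; this cancellation is the entire reason for the exponent $-1/2$. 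One is left with
$$\psi_1''=\alpha^{3/2}\ddot\varphi-\tfrac{1}{2}\alpha^{-3/2}\alpha''\,\varphi+\tfrac{3}{4}\alpha^{-5/2}(\alpha')^2\,\varphi,$$
and substituting back and multiplying through by $\alpha^{1/2}$ converts the equation into the Schr\"odinger form (\ref{sh}) with the Schwarzian-type potential
$$V(\tau)=\frac{k^2}{2a^2(1+\rho^2)}+\frac{\alpha''}{2\alpha^3}-\frac{3(\alpha')^2}{4\alpha^4}.$$

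It remains to evaluate these derivative terms in terms of $\rho$. Differentiating $\alpha^2=2(c^2+b^2\rho^2)/(1+\rho^2)$ and using $b^2-c^2=a^2$ gives $\alpha\alpha'=2a^2\rho/(1+\rho^2)^2$; one further differentiation yields $\alpha\alpha''+(\alpha')^2=2a^2(1-3\rho^2)/(1+\rho^2)^3$. From these one extracts $(\alpha')^2$ and $\alpha\alpha''$ separately, puts $\alpha''/(2\alpha^3)$ and $-3(\alpha')^2/(4\alpha^4)$ over the common denominator $8(1+\rho^2)(c^2+b^2\rho^2)^3$, and expands; the identity $b^2-3c^2-a^2=-2c^2$ then collects the $\rho^2$-coefficient into $3a^2+4c^2=3b^2+c^2$, reproducing (\ref{pot}) exactly. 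The only real obstacle is algebraic bookkeeping in this last step — the conceptual content sits entirely in the $\dot\varphi$-cancellation, which is what justifies the ansatz $\psi_1=\alpha^{-1/2}\varphi$ in the first place.
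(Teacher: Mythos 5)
Your proposal is correct and follows essentially the same route as the paper: the standard Liouville transformation with the change of independent variable $d\tau=\alpha\,d\rho$ and amplitude factor $\alpha^{-1/2}$, followed by evaluation of the resulting Schwarzian-type terms $\frac{\alpha''}{2\alpha^3}-\frac{3(\alpha')^2}{4\alpha^4}$ using $\alpha\alpha'=\frac{2a^2\rho}{(1+\rho^2)^2}$ and $b^2-c^2=a^2$, which indeed reproduces (\ref{pot}). The only (cosmetic) difference is that the paper first rewrites the equation in the variable $\tau$ and then inserts the amplitude factor, whereas you substitute both at once; the computations agree.
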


\begin{proof} Indeed,
$$\frac{d\psi_1}{d\rho}=\frac{d\psi_1}{d\tau}\frac{d\tau}{d\rho}=\alpha\frac{d\psi_1}{d\tau},\ \
\frac{d^2\psi_1}{d\rho^2}=\alpha^2\frac{d^2\psi_1}{d\tau^2}+\alpha\frac{d\alpha}{d\tau}\frac{d\psi_1}{d\tau},
$$
then we have
\beq{in}
-\frac{d^2\psi_1}{d\tau^2}-\frac{2a^2\rho}{\alpha^3(1+\rho^2)^2}\frac{d\psi_1}{d\tau}+\frac{k^2}{2a^2(1+\rho^2)}\psi_1=E\psi_1.
\eeq
Taking $\psi_1(\rho)=\alpha^{-\frac{1}{2}}\varphi$ in \eqref{in}, a direct calculation yields \eqref{sh},\eqref{pot}.
Note that at infinity the new variable $\tau$ grows as $b\rho$.
\end{proof}

The classical reflection condition $I^2>2a^2H$ can be rewritten in quantum case as
\beq{qref}
E<\frac{k^2}{2a^2}
\eeq
since $k$ is the quantised value of $I$ and $E$ is quantum energy $H.$

It is interesting that the right hand side is the height of the graph of the first part of the potential (\ref{pot}).
The second part, decaying as $\rho^{-4}$ as $\rho\to \infty$, for some reason does not appear in this condition.

Note that if let $\rho=\sinh u $ in \eqref{Lk}, we get
\beq{ince}
\frac{1}{2(c^2+b^2\sinh^2 u)}\left(-\frac{d^2\psi_1}{du^2}+\frac{\sinh u}{\cosh u}\frac{d\psi_1}{du}\right)
+\frac{k^2}{2a^2\cosh^2 u}\psi_1=E\psi_1,
\eeq
which leads to
\beq{mince}
(1+\cosh2u)\frac{d^2\psi_1}{du^2}-\sinh2u\frac{d\psi_1}{du}
+(\alpha + \beta \cosh 2u + \gamma \cosh 4u)\psi_1=0,
\eeq
where
\beq{abg}
\alpha=\frac{a^2(4a^2-3b^2)E+8k^2(b^2-2a^2)}{2a^2},\, \beta=\frac{8k^2b^2-4a^2c^2E}{2a^2},\,\gamma=\frac{b^2E}{2}
\eeq
This equation is a mixture of the hyperbolic versions of the generalised Ince equation
$$
(1+\alpha \cos 2x) y''+\beta\sin 2x y'+(\gamma+\delta \cos 2x)y=0
$$
and the Whittaker-Hill equation
$$
y''+(\alpha+ \beta \cos 2x + \gamma \cos 4x)y=0
$$
(see \cite{MW}). It would be interesting to study this link in more detail.

\section{Kn\"orrer's map and projective equivalence}

Let us consider now the corresponding projective quadric $\mathcal H$ given in the real projective space $\mathbb RP^{n+1}$ with projective coordinates $\xi_0:\dots :\xi_n: \xi_{n+1}$ by the equation
\beq{ph}
b_0\xi_0^2 + \dots + b_{n}\xi_{n}^2 = \xi_{n+1}^2,
\eeq
where we assume as before that $b_0<0<b_1<\dots<b_n$.
In the affine chart $\mathbb R^n$ with $\xi_{n+1}\neq 0$ with coordinates $x_k=\xi_k/\xi_{n+1}, \, k=0,\dots, n$ it coincides with our hyperboloid
$$
b_0x_0^2 + \dots + b_{n}x_{n}^2=1,
$$
so $\mathcal H$ is the projective closure of our hyperboloid.

Topologically $\mathcal H$ is the quotient of $S^{n-1}\times S^1$ by the involution $\sigma$ acting by the antipodal map $v \to-v$ on both $S^{n-1}$ and $S^1$. When $n$ is even $\mathcal H$ is diffeomorphic $S^{n-1}\times S^1$, but if $n$ is odd and larger than 1 $\mathcal H$ is non-orientable manifold, doubly covered by $S^{n-1}\times S^1$.

Note that the projective closure $\mathcal H_2$ of the two-sheeted hyperboloid
\beq{ph1}
b_0\xi_0^2 + \dots + b_{n}\xi_{n}^2 = -\xi_{n+1}^2,
\eeq
topologically is just the sphere $S^n.$

The standard Euclidean metric $ds_0^2=\sum_{k=0}^{n} dx_k^2= (dx, dx)$ on the $\mathbb R^{n+1}$
in the projective coordinates has the form
\beq{metric0}
 ds_0^2=\sum_{k=0}^n \frac{\xi_{n+1}d\xi_k^2-2\xi_kd\xi_kd\xi_{n+1}}{\xi_{n+1}^3}+ \frac{|\xi|^2 d\xi_{n+1}^2}{\xi_{n+1}^{4}},\,\, |\xi|^2=\sum_{k=0}^n \xi_k^2.
\eeq
It becomes singular at the infinity hyperplane $\xi_{n+1}=0$ and can not be used to induce regular metric on the whole $\mathcal H.$

Kn\"orrer's change of time (\ref{tau}) and the form (\ref{lambda}) of the Lagrange multiplier $\lambda=\frac{(Bx',x')}{(Bx,Bx)}$ suggest the following conformally flat metric given in the affine coordinates by
\beq{metric1}
ds_1^2=\frac{1}{|Bx|^2}(Bdx,dx)=\frac{b_0 dx_0^2 + b_1 dx_1^2 + \dots + b_{n} dx_{n}^2}{b_0^2x_0^2 + b_1^2 x_1^2 \dots + b_{n}^2 x_{n}^2}.
\eeq

In the case of ellipsoid $\mathcal E$ given by (\ref{ph}) with positive $b_0,\dots, b_n$ Tabachnikov \cite{T,T2} and, independently, Matveev and Topalov \cite{MT,TM} proved the following remarkable property of this metric.

Recall that two metrics on a manifold are {\it projectively equivalent} if they have the same set of geodesics, possibly in different parametrisation.
Such metrics were studied already by Beltrami \cite{Belt}, Dini \cite{Dini} and Levi-Civita \cite{LC}, but in spite of long history the following nice result seems to be discovered only recently.

\begin{Theorem}\cite{MT,T}
The restrictions of metrics $ds_0^2$ and $ds_1^2$ on $\mathcal E$ are projectively equivalent.
\end{Theorem}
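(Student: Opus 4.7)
The plan is to identify Kn\"orrer's reparametrisation $s\mapsto \tau$ with the passage from $ds_0^2$-arclength to $ds_1^2$-arclength along Jacobi geodesics, and then to show that in the variable $\tau$ the two metrics yield the \emph{same} geodesic equation, so their geodesics coincide as unparametrised curves. First I would observe that along any $ds_0^2$-geodesic $x(s)$ on $\mathcal E$ one has
$$\alpha\,ds = \frac{\sqrt{(Bx',x')}}{|Bx|}\,ds = \frac{\sqrt{(B\,dx,dx)}}{|Bx|} = ds_1,$$
so Kn\"orrer's time $\tau=\int\alpha\,ds$ is exactly the $ds_1^2$-arclength, and the arc-length normalisation for $ds_1^2$ reads $(B\dot x,\dot x)=|Bx|^2$. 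It therefore suffices to check that the reparametrised curve $x(\tau)$ satisfies the arc-length geodesic equation of $ds_1^2$ on $\mathcal E$.

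Next, I would derive this geodesic equation from the Lagrangian $L=\tfrac12(B\dot x,\dot x)/|Bx|^2$ under the holonomic constraint $(Bx,x)=1$ enforced by a multiplier $2\mu$. The Euler--Lagrange computation yields
$$\ddot x - \frac{2(B^2x,\dot x)}{|Bx|^2}\,\dot x + \frac{(B\dot x,\dot x)}{|Bx|^2}\,Bx = 2\mu\,|Bx|^2\,x.$$
Differentiating the constraint once and twice gives $(Bx,\dot x)=0$ and $(Bx,\ddot x)=-(B\dot x,\dot x)=-|Bx|^2$; taking the inner product of the Euler--Lagrange equation with $Bx$ and substituting these identities, the left-hand side telescopes to $-|Bx|^2+|Bx|^2=0$, leaving $2\mu|Bx|^2=0$, so $\mu=0$. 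The equation collapses to
$$\ddot x - \frac{2(B^2x,\dot x)}{|Bx|^2}\,\dot x = -Bx.$$

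To identify this with Kn\"orrer's reparametrised geodesic equation (\ref{kgeod}), I would invoke the Joachimsthal integral $J=|Bx|^4\alpha^2$, constant along the geodesic, which immediately gives $\dot\alpha/\alpha=-\tfrac{d}{d\tau}\log|Bx|^2=-2(B^2x,\dot x)/|Bx|^2$. Hence the above is exactly (\ref{kgeod}) with $\varepsilon=+1$, the two geodesic systems coincide as unparametrised curves, and projective equivalence follows. The principal obstacle is the vanishing of the Lagrange multiplier $\mu$: it depends on a nontrivial interplay between the conformal factor $|Bx|^{-2}$, the constraint $(Bx,x)=1$, and its second-order consequence $(Bx,\ddot x)=-(B\dot x,\dot x)$, and without it the $Bx$-term produced by $L$ would not combine with the multiplier to reproduce the clean right-hand side $-Bx$ of Kn\"orrer's equation; this is precisely the algebraic miracle that singles out the metric $ds_1^2$ among conformal rescalings of $(B\,dx,dx)$.
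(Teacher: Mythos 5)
Your proof is correct, and it arrives at the same punchline as the paper --- the identification of Kn\"orrer's equation (\ref{kgeod}) with the arclength geodesic equation of $ds_1^2$ restricted to $\mathcal E$, via the observation that $\tau$ is precisely the $ds_1$-arclength --- but it gets there by a genuinely different technical route. The paper works extrinsically: it computes the Christoffel symbols of the ambient metric $ds_1^2$ on all of $\mathbb R^{n+1}$, shows that reparametrised Euclidean geodesics satisfy the \emph{unconstrained} equation $\nabla^2 x=0$, and separately establishes (via the inversion $\sigma(x)=x/(Bx,x)$ being an isometry fixing $\mathcal E$) that $\mathcal E$ is totally geodesic, so that ambient geodesics lying in $\mathcal E$ are geodesics of the restriction. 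You instead work intrinsically with the constrained Lagrangian $L=\tfrac12(B\dot x,\dot x)/|Bx|^2$ on $(Bx,x)=1$ and show that the Lagrange multiplier vanishes; your $\mu=0$ is exactly the counterpart of the paper's $\nabla^2x=0$ together with total geodesy, i.e.\ it encodes the same ``algebraic miracle'' that the normal component of the $ds_1^2$-acceleration vanishes identically on $\mathcal E$. Your version is shorter and avoids both the Christoffel computation and the isometry lemma, at the cost of not exhibiting the extra geometric information the paper extracts (that $\mathcal E$ is totally geodesic in $(\mathbb R^{n+1},ds_1^2)$, which the authors state as a separate new result). Two small points worth making explicit: the passage from your Euler--Lagrange covector equation to the displayed vector equation involves multiplying through by $|Bx|^2B^{-1}$, which is why the constraint force $2\mu Bx$ appears as $2\mu|Bx|^2x$; and projective equivalence formally requires both inclusions of geodesic families, which follows as usual from uniqueness of the geodesic through a given point in a given direction (the paper leaves this implicit as well).
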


We have the following additional observation, which seems to be new.

\begin{Theorem}
The ellipsoid $\mathcal E$ is totally geodesic submanifold of $\mathbb R^{n+1}$ with metric $ds_1^2$ given by (\ref{metric1}).
The Kn\"orrer parameter $\tau$ given by (\ref{tau}) is a natural parameter on the corresponding geodesics.
\end{Theorem}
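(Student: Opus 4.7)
The plan is to make the geometry of $(\mathbb{R}^{n+1}, ds_1^2)$ fully explicit through a linear change of coordinates, from which both assertions follow essentially at once. In the ellipsoid case all $b_i$ are positive, so I introduce rescaled coordinates $\tilde x_i := \sqrt{b_i}\, x_i$. Under this change $(B\,dx, dx)$ becomes the standard Euclidean form $|d\tilde x|^2$, the ellipsoid $\mathcal E = \{(Bx,x)=1\}$ becomes the unit sphere $|\tilde x| = 1$, and the metric \eqref{metric1} takes the conformally flat shape
\begin{equation*}
ds_1^2 = \frac{|d\tilde x|^2}{|\tilde x|^2}.
\end{equation*}

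I then pass to log-radial coordinates $t := \log|\tilde x|$, $\theta := \tilde x / |\tilde x| \in S^n$. Using $|d\tilde x|^2 = d|\tilde x|^2 + |\tilde x|^2\, d\sigma^2$, with $d\sigma^2$ the round metric on $S^n$, and dividing by $|\tilde x|^2$ I obtain
\begin{equation*}
ds_1^2 = dt^2 + d\sigma^2.
\end{equation*}
Thus $(\mathbb R^{n+1}\setminus\{0\}, ds_1^2)$ is isometric to the Riemannian product cylinder $\mathbb R \times S^n$, and $\mathcal E$ is identified with the slice $\{0\} \times S^n$. Since any fibre of a Riemannian product is totally geodesic (a geodesic of $S^n$ lifts to a geodesic of the product with constant $\mathbb R$-coordinate) and the induced metric on the slice is exactly $d\sigma^2$, the first assertion of the theorem follows.

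For the second assertion I check directly that $(B\dot x,\dot x)/|Bx|^2 = 1$ along any geodesic on $\mathcal E$, which is precisely the statement that $\tau$ is an arc-length parameter for $ds_1^2$. Writing $\dot x = x'/\alpha$ and using the definition $\alpha^2 = J/|Bx|^4$ from \eqref{tau} together with the Joachimsthal identity $(Bx',x') = J/|Bx|^2$ (valid on the ellipsoid, where $J>0$), a one-line substitution gives
\begin{equation*}
\frac{(B\dot x, \dot x)}{|Bx|^2} = \frac{(Bx', x')}{\alpha^2 |Bx|^2} = \frac{J/|Bx|^2}{(J/|Bx|^4)\,|Bx|^2} = 1.
\end{equation*}
I do not anticipate any real obstacle beyond spotting the cylinder isometry. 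Once that picture is in place, the total geodesy is a standard product-metric fact and the parameter assertion collapses to the short algebraic identity above.
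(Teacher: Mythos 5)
Your second computation (unit $ds_1^2$-speed of the Kn\"orrer-reparametrised geodesic) is correct and is essentially what the paper extracts from its Christoffel-symbol calculation. But the first part of your argument contains a genuine error that kills the cylinder picture. Under the rescaling $\tilde x_i=\sqrt{b_i}\,x_i$ you correctly get $(Bdx,dx)=|d\tilde x|^2$ and $\mathcal E=\{|\tilde x|=1\}$, but the denominator of \eqref{metric1} is $|Bx|^2=\sum_i b_i^2x_i^2=\sum_i b_i\tilde x_i^2=(B\tilde x,\tilde x)$, \emph{not} $|\tilde x|^2$ (check $b_0=1,\ b_1=4$: $|Bx|^2=x_0^2+16x_1^2$ while $|\tilde x|^2=x_0^2+4x_1^2$). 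So $ds_1^2=|d\tilde x|^2/(B\tilde x,\tilde x)$ is not the cylinder metric $dt^2+d\sigma^2$, and the product-slice argument for total geodesy does not apply. The error is not cosmetic: if the cylinder identification held, the restriction of $ds_1^2$ to $\mathcal E$ would be the round metric and its geodesics would be great circles, contradicting the projective equivalence with the Euclidean metric on a generic ellipsoid. Your rescaling can still be salvaged into a correct proof: the paper's key lemma is that the inversion $\sigma(x)=x/(Bx,x)$ — which in your coordinates is exactly the inversion $\tilde x\mapsto\tilde x/|\tilde x|^2$ in the unit sphere — is an isometry of $ds_1^2$, because $|d\tilde y|^2=|d\tilde x|^2/|\tilde x|^4$ and $(B\tilde y,\tilde y)=(B\tilde x,\tilde x)/|\tilde x|^4$ scale by the same factor; its fixed set is $\mathcal E$, hence $\mathcal E$ is totally geodesic.

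For the second assertion, your identity $(B\dot x,\dot x)/|Bx|^2=1$ is right, but as written it only shows that $\tau$ is the $ds_1^2$-arc length \emph{along the Euclidean geodesics}; to conclude that $\tau$ is an affine parameter of a $ds_1^2$-geodesic you still need to know that these curves \emph{are} (unparametrised) geodesics of $ds_1^2|_{\mathcal E}$. You could import that from the Tabachnikov--Matveev--Topalov theorem quoted just before, but the paper instead proves it directly by computing the Christoffel symbols of \eqref{metric1} and showing that the Kn\"orrer-time equation \eqref{kgeod} coincides with $\nabla^2x=0$ — which is what makes its argument an independent proof of projective equivalence. Either state explicitly that you are invoking the earlier theorem, or add the direct verification.
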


\begin{proof}
The easiest geometric proof is based on the following lemma.

\begin{Lemma}
The involution $\sigma: \mathbb R^{n+1} \to \mathbb R^{n+1}$ defined by
\beq{sigma}
y=\sigma(x):=\frac{x}{(Bx,x)}
\eeq
preserves the metric (\ref{metric1}) and leaves the ellipsoid $\mathcal E$ as fixed point set.
\end{Lemma}
Proof is by direct check. The facts that $\sigma$ is an involution and has the ellipsoid $\mathcal E$ fixed is obvious.
To prove the rest we have
$$
dy=\frac{dx}{(Bx,x)}-\frac{2x(Bx,dx)}{(Bx,x)^2},
$$
$$
(Bdy,dy)=\frac{(Bdx,dx)}{(Bx,x)^2}-\frac{4(Bx,dx)^2}{(Bx,x)^3}+\frac{4(Bx,x)(Bx,dx)^2}{(Bx,x)^4}=\frac{(Bdx,dx)}{(Bx,x)^2}.
$$
Since $|By|^2=\frac{|Bx|^2}{(Bx,x)^2}$ we have
$$
\frac{(Bdy,dy)}{|By|^2}=\frac{(Bdx,dx)}{(Bx,x)^2}\frac{(Bx,x)^2}{|Bx|^2}=\frac{(Bdx,dx)}{|Bx|^2},
$$
which means that $\sigma$ is an isometry of (\ref{metric1}). Now the first claim follows from the well-known fact that the fixed set of an isometry is totally geodesic submanifold.

To prove the second part recall that the geodesics on a submanifold $Y^n \subset X^{n+1}$ of a pseudo-Riemannian manifold $X^{n+1}$ are the curves $x(s)$ on $Y^n$ with the second covariant derivative $\nabla^2 x$ being normal to $Y^n$ for all $s.$
By definition, the second covariant derivative is defined in the local coordinates $x^i, \, i=0,\dots, n$ on $X^{n+1}$ by
$$
\nabla^2 x^i=\ddot x^i+\Gamma^{i}_{jk} \dot{x}^j \dot{x}^k,
$$
where $\Gamma^{i}_{jk}$ are the standard Christoffel symbols  \cite{DC} of the metric $g_{ij}$ on $X^{n+1}$ defined by
$$
\Gamma^{i}_{jk}=\frac{1}{2}g^{im}(\partial_j g_{mk}+\partial_k g_{mj}-\partial_m g_{jk}).
$$

An easy calculation for the metric (\ref{metric1}) shows that $\Gamma^{i}_{jk}=0$ for distinct $i,j,k$ and
$$
\Gamma^{i}_{ik}= \partial_k \log \gamma, \,\, k=0,\dots,n, \quad
\Gamma^{i}_{jj}=-\frac{b_j}{b_i} \partial_i \log \gamma, \,\,  j \neq i,
$$
where as before $ \gamma=|Bx|^{-1}.$ Using this, we can rewrite
$$
\Gamma^{i}_{jk} \dot{x}^j \dot{x}^k=2\frac{\dot \gamma}{\gamma}\dot x^i+\frac{(B\dot x, \dot x)}{|Bx|^2}(Bx)^i,
$$
and thus
$$
\nabla^2 x=\ddot x+2\frac{\dot \gamma}{\gamma}\dot x+\frac{(B\dot x, \dot x)}{|Bx|^2}Bx.
$$
Comparing this with equation (\ref{kgeod}) of geodesics in Kn\"orrer's parameter we see that they satisfy the geodesic equation of metric (\ref{metric1})
$$
\nabla^2 x=\ddot x+2\frac{\dot \gamma}{\gamma}\dot x+Bx=0,
$$
since in Kn\"orrer's parameter $\frac{(B\dot x, \dot x)}{|Bx|^2}= \varepsilon$ and $\varepsilon=1$ in the ellipsoid case.
This completes the proof of the theorem.
\end{proof}

\begin{Remark}
Note that this gives one more proof of Tabachnikov-Matveev-Topalov result, which works also for hyperboloids. The only difference is that in one-sheeted case the second metric is pseudo-Riemannian and the isotropic geodesics (which are simply generating straight lines) should be considered separately.
\end{Remark}

Let us come back to the projective picture.
At the infinity hyperplane $\xi_{n+1}=0$ we have a clear problem with the first metric.
Remarkably the second metric has also nice properties at infinity as it was pointed out by the first author in \cite{V1}.

\begin{Theorem}\cite{V1}
The restriction of metric (\ref{metric1}) to the projective closure of hyperboloids is a regular metric, which is Riemannian in two-sheeted case $\mathcal H_2$ and pseudo-Riemannian of signature $(n,1)$ in one-sheeted case
$\mathcal H.$
\end{Theorem}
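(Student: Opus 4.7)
The plan is to rewrite the metric \eqref{metric1} in the homogeneous coordinates $(\xi_0,\dots,\xi_{n+1})$ of $\mathbb{RP}^{n+1}$ and show that the apparent singularity at the infinity hyperplane $\xi_{n+1}=0$ disappears once one restricts to $\mathcal H_\epsilon$, where $\epsilon=+1$ gives $\mathcal H$ and $\epsilon=-1$ gives $\mathcal H_2$. Substituting $x_k=\xi_k/\xi_{n+1}$ and using $dx_k=\xi_{n+1}^{-2}(\xi_{n+1}d\xi_k-\xi_k d\xi_{n+1})$, a short calculation produces
$$
ds_1^2=\frac{\sum_{k=0}^{n} b_k\,(\xi_{n+1}d\xi_k-\xi_k d\xi_{n+1})^2}{\xi_{n+1}^{2}\sum_{k=0}^{n} b_k^2\,\xi_k^2}.
$$
The offending $\xi_{n+1}^2$ in the denominator has to be killed by a matching factor emerging from the numerator, but only after the quadric equation is invoked; this is the crucial point.

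The algebraic heart of the argument is the next step. Expanding the numerator gives
$$
\xi_{n+1}^{2}\textstyle\sum b_k d\xi_k^2-2\xi_{n+1}d\xi_{n+1}\sum b_k\xi_k\,d\xi_k+d\xi_{n+1}^{2}\sum b_k\xi_k^{2},
$$
and on $\mathcal H_\epsilon$ the defining equation $\sum b_k\xi_k^2=\epsilon\xi_{n+1}^2$ and its differential $\sum b_k\xi_k\,d\xi_k=\epsilon\xi_{n+1}d\xi_{n+1}$ collapse the last two terms into $-\epsilon\xi_{n+1}^{2}d\xi_{n+1}^{2}$. Pulling out the common factor $\xi_{n+1}^{2}$ yields the sought-for expression
$$
ds_1^2\big|_{\mathcal H_\epsilon}=\frac{\sum_{k=0}^{n} b_k\,d\xi_k^2-\epsilon\,d\xi_{n+1}^{2}}{\sum_{k=0}^{n} b_k^{2}\,\xi_k^{2}}.
$$

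With this clean formula in hand, the remaining claims are immediate. Well-definedness on the projective variety follows because both numerator and denominator are homogeneous of degree $2$ in $(\xi,d\xi)$ and the numerator vanishes on the Euler direction $d\xi=\xi$ by the defining equation of $\mathcal H_\epsilon$. Regularity at infinity is automatic since the denominator $\sum b_k^{2}\xi_k^{2}$ is a positive sum of squares, vanishing only when $\xi_0=\dots=\xi_n=0$; but that would force the point $(0\!:\!\cdots\!:\!0\!:\!1)$, which does not lie on $\mathcal H_\epsilon$ since $\epsilon\neq 0$.

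Finally, the signature is read off from the numerator $\Omega=\sum b_k\,d\xi_k^{2}-\epsilon\,d\xi_{n+1}^{2}$ on $\mathbb R^{n+2}$, which has one negative eigenvalue from $b_0<0$ plus an extra $-\epsilon$. Restricting $\Omega$ to the tangent space $\xi^{\perp_\Omega}$ of the cone at a point $\xi\in\mathcal H_\epsilon$, and then quotienting by the radial direction (which is $\Omega$-null precisely because $\Omega(\xi)=0$ on the cone), drops the signature by $(1,1)$ via standard linear algebra. For $\epsilon=-1$ one obtains a positive definite form, hence a Riemannian metric on $\mathcal H_2$; for $\epsilon=+1$ one extra negative direction survives, giving the pseudo-Riemannian (Lorentzian) metric on $\mathcal H$ asserted in the statement. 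The main obstacle is arranging the $\xi_{n+1}^{2}$ cancellation in the simplification step; once that is done, regularity and signature reduce to elementary linear algebra on the perp of a null vector.
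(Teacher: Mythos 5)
Your computation is correct and reaches the same formulas as the paper, namely (\ref{drtil}) and (\ref{drtil2}), but it is organized differently: the paper substitutes into an affine chart with $\xi_0\neq 0$, uses the differentiated quadric relation $y_{n+1}dy_{n+1}=b_1y_1dy_1+\dots+b_ny_ndy_n$ to cancel the singular factor, and then remarks that ``the same works in any other affine chart,'' whereas you perform the cancellation once in homogeneous coordinates, obtaining the chart-free expression
$$
ds_1^2\big|_{\mathcal H_\epsilon}=\frac{\sum_{k=0}^{n} b_k\,d\xi_k^2-\epsilon\,d\xi_{n+1}^{2}}{\sum_{k=0}^{n} b_k^{2}\,\xi_k^{2}},
$$
which covers all charts simultaneously and makes regularity transparent (the denominator vanishes only at $(0:\cdots:0:1)\notin\mathcal H_\epsilon$). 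The key algebraic step --- using $\sum b_k\xi_k^2=\epsilon\xi_{n+1}^2$ and its differential to collapse the cross terms and extract the factor $\xi_{n+1}^2$ --- is the same cancellation the paper exploits, so the two proofs are substantively identical; your version additionally supplies the linear-algebra argument for the signature (restricting the ambient form of signature $(p,q)$ to the $\Omega$-orthogonal complement of the null vector $\xi$ and quotienting by $\mathbb R\xi$ drops the signature by $(1,1)$), which the paper leaves implicit. One point worth flagging: your count gives signature $(n+1,1)-(1,1)=(n,0)$ for $\mathcal H_2$ (Riemannian, as claimed) and $(n,2)-(1,1)=(n-1,1)$ for $\mathcal H$; since $\mathcal H$ is $n$-dimensional, $(n-1,1)$ is the dimensionally consistent answer, and the theorem's stated ``$(n,1)$'' evidently refers to the signature of the numerator form $b_1\,dy_1^2+\dots+b_n\,dy_n^2-dy_{n+1}^2$ in the $n+1$ ambient differentials of (\ref{drtil}) rather than of its restriction to the $n$-dimensional tangent space. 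Your argument is the more precise one here. A minor presentational remark: well-definedness on $\mathbb RP^{n+1}$ requires not only $\Omega(\xi,\xi)=0$ on the cone but that $\xi$ lies in the radical of $\Omega$ restricted to the tangent space of the cone, i.e.\ $\Omega(\xi,v)=0$ for all tangent $v$; you do use this in the signature paragraph, but it belongs logically with the well-definedness discussion.
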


\begin{proof}
We can check this by explicit calculations, which we will perform in one-sheeted case. Consider for example the affine chart with $\xi_0 \neq 0$
with affine coordinates $y_k = \xi_k/\xi_{0}, \, k =0,\,\dots,\, n.$ Making the change of variables
$$x_0=\frac{1}{y_{n+1}}, \,x_1=\frac{y_1}{y_{n+1}}, \dots, \, x_{n}=\frac{y_{n}}{y_{n+1}},$$
we have the following affine equation
$$b_0 + b_1 y_1^2 \dots + b_{n}y_{n}^2 = y_{n+1}^2.$$

Now substituting into (\ref{metric1})
$$
dx_0=-\frac{dy_{n+1}}{y_{n+1}^2}, \, dx_1=\frac{y_{n+1} dy_1-y_1 dy_{n+1}}{y_{n+1}^2}, \dots, dx_n=\frac{y_{n+1} dy_n-y_n dy_{n+1}}{y_{n+1}^2}
$$
and using the relation
$$
y_{n+1}dy_{n+1}=b_1y_1dy_1+\dots + b_ny_n dy_n
$$
holding on $\mathcal H$, we have the following expression for the restricted metric $dr^2$ on $\mathcal H$:
\begin{equation}
\label{drtil}
dr^2 = \frac{b_1 dy_1^2 + \dots + b_{n} dy_{n}^2-dy_{n+1}^2}{b_0^2 + b_1^2 y_1^2+ \dots + b_{n}^2 y_{n}^2}.
\end{equation}
We see indeed that this metric is regular when $y_{n+1}=0,$ which means that $ds_1^2$ indeed can be regularly extended to the whole $\mathcal H.$
(Note that outside $\mathcal H$ the metrics $dr^2$ and $ds_1^2$ are different.) The same works in any other affine chart, proving the claim in one-sheeted case.

In the two-sheeted case $\mathcal H_2$ we have similar calculations leading to the restriction of the Riemannian metric
\begin{equation}
\label{drtil2}
dr^2 = \frac{b_1 dy_1^2 + \dots + b_{n} dy_{n}^2+dy_{n+1}^2}{b_0^2 + b_1^2 y_1^2+ \dots + b_{n}^2 y_{n}^2}.
\end{equation}
Note that this provides a Riemannian metric on the topological sphere $S^n$ for which projectively equivalent metric (\ref{metric0}) is singular.
\end{proof}

Let us define now the following projective version of Kn\"orrer's map $$\nu: \mathcal H \to \mathbb RP^{n}.$$
In the affine chart with $\xi_{n+1}\neq 0$ we define it by the formula
\begin{equation}
\label{pk}
\nu(x)=[Bx], \quad x \in \mathbb H,
\end{equation}
where $[Bx] \in \mathbb RP^{n}$ is the line defined by vector $Bx$, and then extend it to the whole $\mathcal H$ by continuity. It is easy to check that this provides a smooth map of $\mathcal H$ to $\mathbb RP^{n}.$

Since Neumann system on $S^n$ is invariant under antipodal map $\sigma: v \to -v$ it can be reduced to the quotient $\mathbb RP^{n}=S^n/\sigma.$

\begin{Theorem}
The projective Kn\"orrer's map (\ref{pk}) maps the non-isotropic geodesics on $\mathcal H$ with restricted metric (\ref{metric1}) to the solutions of the Neumann system on $\mathbb RP^n,$ satisfying the relation $\Psi^\varepsilon_0(p,q)=0.$
\end{Theorem}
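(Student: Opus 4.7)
The plan is to reduce the statement to Knörrer's original Theorem 4.1 together with a smoothness/continuity argument at the projective infinity of $\mathcal H$.

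First, I would work in the affine chart $\xi_{n+1}\ne 0$, where $\mathcal H$ coincides with the concrete hyperboloid $(Bx,x)=1$ in $\mathbb R^{n+1}$. The key observation is that Knörrer's parameter $\tau$ with $d\tau/ds=\sqrt{|J|}/|Bx|^2$ is exactly the natural (arclength) parameter of the projectively equivalent metric $ds_1^2=(Bdx,dx)/|Bx|^2$ restricted to $\mathcal H$. Indeed, writing $\dot x=dx/d\tau$, we compute $(B\dot x,\dot x)/|Bx|^2=(Bx',x')/(\alpha^2|Bx|^2)=\varepsilon$, where $\varepsilon=\mathrm{sgn}(J)=\pm 1$. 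Combined with the preceding theorem (that $\mathcal H$ is totally geodesic for $ds_1^2$), this identifies the intrinsic $ds_1^2$-geodesics on $\mathcal H$, parametrized by arclength, with the solutions of equation (\ref{kgeod}) in Knörrer's parameter.

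Next, because we assume the geodesic is non-isotropic, the Joachimsthal integral $J$ is nonzero and $\varepsilon=\pm 1$ is well defined. Knörrer's Theorem 4.1 then tells us that $q=Bx/|Bx|\in S^n$ satisfies the Neumann equations $\ddot q=-\varepsilon Bq+\mu q$ together with the constraint $\Psi_0^\varepsilon(p,q)=0$. Since both the Neumann field and this constraint are even in $q$, they descend to a well-defined flow on $\mathbb{RP}^n=S^n/\{\pm 1\}$. The projective Knörrer map $\nu(x)=[Bx]$ is, tautologically, the projection of $q$ to $\mathbb{RP}^n$, so in the chart $\xi_{n+1}\ne 0$ we have precisely the claimed intertwining.

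Finally, to conclude on the whole projective closure, I would note that $ds_1^2$ extends to a smooth (pseudo-)Riemannian metric on $\mathcal H$ by the previous theorem, and that $\nu$ extends smoothly: e.g.\ in the chart $\xi_0\ne 0$ with $y_k=\xi_k/\xi_0$ one checks directly that $\nu=[b_0:b_1 y_1:\dots:b_n y_n]$, which is regular everywhere, including on the asymptotic curve $y_{n+1}=0$. Both the geodesic flow of $ds_1^2$ on $\mathcal H$ and the reduced Neumann flow on $\mathbb{RP}^n$ are smooth; since the intertwining holds on the open dense set where $\xi_{n+1}\neq 0$, it extends by continuity to all of $\mathcal H$, and the algebraic constraint $\Psi_0^\varepsilon(p,q)=0$ is preserved along orbits.

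The point requiring most care is the treatment of the sign $\varepsilon$: a complete $ds_1^2$-geodesic in the one-sheeted case may in principle traverse the projective closure, and one must verify that $\varepsilon$ is constant on such a geodesic and that the non-isotropy hypothesis excludes precisely the ruled generators where $J=0$. This, however, is implicit in the earlier discussion of Knörrer's reparametrization and does not require new arguments.
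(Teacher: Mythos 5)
Your proposal is correct and follows essentially the same route as the paper, which simply states that the result "follows directly from the proofs of Theorems 4.1 and 9.2" — i.e.\ Kn\"orrer's theorem combined with the identification of $\tau$ as the natural parameter of $ds_1^2$ on the totally geodesic quadric, then descent to $\mathbb{RP}^n$ and extension by continuity across the infinity hyperplane. You merely make explicit the smoothness of $\nu$ in the chart $\xi_0\neq 0$ and the density/continuity argument that the paper leaves implicit, which is a faithful elaboration rather than a different proof.
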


The proof follows directly from the proofs of Theorems 4.1 and 9.2. Similar claim holds for two-sheeted case.



%

\section{Concluding remarks}

The quantum scattering on hyperboloids is probably the most important question, which is still to be studied even in two-dimensional case.
The calculations in the symmetric case above show that we probably should not expect an easy explicit answer.

Another interesting question is related to Moser-Trubowitz isomorphism between solutions of Neumann systems and finite-gap potentials \cite{M1, M3, V80}.
As it was observed in \cite{V} Kn\"orrer's map allows to translate it into relation between geodesics on ellipsoid with certain stability problem in particle dynamics.
A similar observation about relation of geodesics on ellipsoid with the stationary Harry-Dym equation was done by Cao \cite{Cao}.

It is natural to ask about possible spectral interpretation of geodesics on hyperboloids (see \cite{V1}).
Our results show that after Kn\"orrer's map and Moser-Trubowitz isomorphism we have a finite-gap Schr\"odinger operator restricted to certain finite interval, which seems to be not studied yet.

Finally we can mention the problem of geodesic scattering on quadrics in pseudo-Euclidean case (see \cite{V90} and relevant work by Khesin and Tabachnikov \cite{KT} and Dragovic and Radnovic \cite{DR}).

\section{Acknowledgements}

We are very grateful to Alexey Bolsinov, Jonathan Eckhardt, Jenya Ferapontov and Sergei Tabachnikov for helpful and encouraging discussions.

L.W. is grateful to the Department of Mathematical Sciences, Loughborough University for the hospitality during the academic year 2019-20, when the work had been done.

L.W. was supported by National Natural Science Foundation of China (project no.11871232) and China Scholarship Council.

\end{document}